\newcommand{\shortmod}{\ensuremath{\negthickspace \negthickspace \negthickspace \pmod}}
\newcommand{\dedekindsum}{S_{\chi_1, \chi_2}}
\newtheorem{conditions}{Condition}
\newcommand{\floor}[1]{\left\lfloor {#1} \right\rfloor}
\newcommand{\fractional}[1]{\left\{ {#1} \right\}}
\newcommand{\parentheses}[1]{\left( {#1} \right)}
\newcommand{\brackets}[1]{\left[ {#1} \right]}
\newcommand{\mycomment}[1]{}
\title{The image of the generalized Dedekind sum}
\author[E. Knight]{Evelyne S. Knight}
 \address{Department of Mathematics \\
        Pomona College \\
        Claremont \\
        CA 91711 \\
         U.S.A}
 \email{eski2022@mymail.pomona.edu}
\author[C. Matos]{Carlos Alexov Matos}
 \address{Department Of Mathematics \\
 Michigan State University \\
 East Lansing \\
 MI 48824 \\
 U.S.A.
 }
 \email{matoscar@msu.edu}
\author[A. Sefidi]{Amira Sefidi}
 \address{Department Of Mathematics \\
 The University of Texas at Austin \\
 Austin \\
 TX 78712 \\
 U.S.A.
 }
 \email{amira.sefidi@utexas.edu}
\author[M. Young]{Matthew P. Young}
 \address{Department of Mathematics \\
 	  Rutgers University \\
 	  Piscataway \\
 	  NJ 08854 \\
 		U.S.A.}		
 \email{mpy4@math.rutgers.edu}
\date{}
\thanks{This material is based upon work supported by the National Science Foundation under agreement No. DMS-2302210 (M.Y.).  Any opinions, findings and conclusions or recommendations expressed in this material are those of the authors and do not necessarily reflect the views of the National Science Foundation. }
\begin{document}

\begin{abstract}
    The newform Dedekind sum $S_{\chi_1, \chi_2}$ associated to a pair of primitive Dirichlet characters $\chi_1$, $\chi_2$ of respective conductors $q_1$, $q_2$,
is a group homomorphism from $\Gamma_1(q_1 q_2)$ into the number field $F_{\chi_1, \chi_2}$ generated by the values of the characters. 
It is a basic question to identify the image of this map, which is known to be a lattice $L_{\chi_1, \chi_2}$ in $F_{\chi_1, \chi_2}$.
It has recently been conjectured that when $\chi_1$ and $\chi_2$ are quadratic, then $ L_{\chi_1, \chi_2} = 2 \mathbb{Z}$.  
    In this paper, we make some progress towards this conjecture by exhibiting an explicit lattice in which $L_{\chi_1, \chi_2}$ is contained; in particular, when the characters are quadratic, the $q_i$ are coprime, odd, and sufficiently large, then $L_{\chi_1, \chi_2} \subseteq \mathbb{Z}$.
\end{abstract}

\maketitle

\section{Introduction}
Historically, the classical Dedekind sum  has found applications across diverse disciplines: from analytic number theory (notably Dedekind’s $\eta$-function) and combinatorial geometry (focusing on lattice point enumeration) to topology (studying signature defects of manifolds) and algebraic number theory (including class number formulas).

\begin{definition}
    For coprime integers $h, k$ where $k >0$, the classical Dedekind sum is defined as
    \[
    s(h,k) = \sum_{j \shortmod{k}} B_{1}\Big(\frac{j}{k}\Big)B_{1}\Big(\frac{hj}{k}\Big)
    \]
   where B$_{1}(x)$ is the first Bernoulli function, also known as the sawtooth function:
    \[
    B_{1}(x) = \begin{cases}
        0,\; \text{if} \; x \in \Z & \\
        x - \lfloor x \rfloor - \frac{1}{2},  \; \text{otherwise}.
    \end{cases}
    \]
    Let $ \left\{ x\right\} = x - \lfloor x \rfloor$ denote the fractional part of x.
\end{definition}

Many properties of the classical Dedekind sum can be derived by way of the Kronecker limit formula for the level $1$ real analytic Eisenstein series.
The newform Dedekind sum (also referred to as the generalized Dedekind sum) was 
studied in \cite{SVY} by extending the Kronecker limit formula for higher level newform Eisenstein series.
\begin{definition}[Newform Dedekind sum, \cite{SVY}]
\label{defi:SVY}
Let $\chi_{1}$ and $\chi_{2}$ be nontrivial, primitive Dirichlet characters modulo $q_{1}$ and $q_{2}$, respectively, where $\chi_{1}\chi_{2}(-1) = 1$. For $\gamma = \left(\begin{smallmatrix}
a & b \\
c & d
\end{smallmatrix}\right), $ $\gamma \in \Gamma_{0}(q_{1}q_{2})$ with $c \geq 1$, the generalized Dedekind sum associated to $\chi_{1}$ and $\chi_{2}$ is given by
\begin{equation}
\label{eq:DedekindDefBernoulli}
S_{\chi_{1}, \chi_{2}}(\gamma) = S_{\chi_1,\chi_2}(a,c)= \sum_{j \shortmod{c}}\sum_{n \shortmod{q_1}}\overline{\chi_2}(j)\overline{\chi_1}(n)B_1\Big(\frac{j}{c}\Big)B_1\Big(\frac{n}{q_1}+\frac{aj}{c}\Big).
\end{equation}
\end{definition}
The assumptions on $\chi_1$, $\chi_2$, $q_1$, and $q_2$ from Definition \ref{defi:SVY} will be in place for the rest of the paper.

In this paper, we are interested in understanding the image of $S_{\chi_1,\chi_2
}$. 
The set of values taken by the classical Dedekind sum has been extensively studied. 
In \cite[Theorem 2]{RG}, for example, the denominator of the classical sum is proven to be a divisor of $2c\cdot \gcd(3, c)$. 

\begin{definition}
    Let $F_{\chi_1,\chi_2}$ denote the smallest number field in which $\chi_1$ and $\chi_2$ take values.
    In addition, let $\Z[\chi_1,\chi_2]$ be the ring of integers of $F_{\chi_1,\chi_2}$.
\end{definition}

\begin{proposition}[\cite{Majure}]
\label{prop:majure-lattices}
     The image $S_{\chi_1, \chi_2}(\Gamma_1(q_1q_2))$ is a lattice (of full rank) inside $F_{\chi_1,\chi_2}$. That is,
    \[
    S_{\chi_1,\chi_2}(\Gamma_1(q_1q_2))=\oplus^n_{i=1}\alpha_i\mathbb{Z},
    \]
    for some $\alpha_1, \dots, \alpha_n \in F_{\chi_1,\chi_2}$, where n is the degree of the field extension $F_{\chi_1,\chi_2}$/ $\mathbb{Q}$.
\end{proposition}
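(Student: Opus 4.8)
The plan is to establish the statement in two essentially independent steps: (i) that the image $I := S_{\chi_1,\chi_2}(\Gamma_1(q_1q_2))$ is a finitely generated, hence free and discrete, subgroup of the additive group of $F_{\chi_1,\chi_2}$, of some rank $r \le n$; and (ii) that in fact $r = n$, so that $I$ is a full lattice.

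\emph{Step (i).} Since $S_{\chi_1,\chi_2}$ is a group homomorphism from $\Gamma_1(q_1q_2)$ to $(F_{\chi_1,\chi_2},+)$, the image $I$ is a subgroup. The group $\Gamma_1(q_1q_2)$ has finite index in $\mathrm{SL}_2(\mathbb{Z})$ and is therefore finitely generated, so $I$ is a finitely generated abelian group; being a subgroup of the torsion-free group $F_{\chi_1,\chi_2}$, it is free of finite rank $r$. Choosing a common denominator $D$ for a finite set of generators of $I$ shows $I \subseteq \tfrac1D\mathbb{Z}[\chi_1,\chi_2]$, which under the Minkowski embedding is a discrete lattice of rank $n$; hence $I$ is itself discrete and $r \le n$. (If one prefers an explicit $D$, one can instead read off from \eqref{eq:DedekindDefBernoulli} that each summand lies in $\tfrac{1}{2q_1c}\mathbb{Z}[\chi_1,\chi_2]$ and check that, after collecting the sum over $j$ into residue classes modulo $q_1q_2$, the $c$ in the denominator cancels, leaving a bound independent of $\gamma$; the finite-generation route, however, avoids this bookkeeping.)

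\emph{Step (ii).} Suppose for contradiction that $r < n$. Then some nonzero $\mathbb{Q}$-linear functional on $F_{\chi_1,\chi_2}$ vanishes on $I$, and by nondegeneracy of the trace pairing it has the form $x \mapsto \mathrm{Tr}_{F_{\chi_1,\chi_2}/\mathbb{Q}}(\beta x)$ for some $\beta \in F_{\chi_1,\chi_2}^{\times}$. The extension $F_{\chi_1,\chi_2}/\mathbb{Q}$ is abelian, and each $\sigma \in \mathrm{Gal}(F_{\chi_1,\chi_2}/\mathbb{Q})$ acts on roots of unity by $\zeta \mapsto \zeta^{t}$ for a suitable $t$ coprime to the orders of $\chi_1,\chi_2$; inspecting \eqref{eq:DedekindDefBernoulli} gives $\sigma(S_{\chi_1,\chi_2}(\gamma)) = S_{\chi_1^{t},\chi_2^{t}}(\gamma)$, where $\chi_1^{t},\chi_2^{t}$ are again primitive of conductors $q_1,q_2$ with $\chi_1^{t}\chi_2^{t}(-1)=1$, so the right-hand side is again a newform Dedekind sum on $\Gamma_1(q_1q_2)$. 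Applying the trace to $\beta\, S_{\chi_1,\chi_2}(\gamma)$ turns the vanishing into
\[
\sum_{t}\sigma_t(\beta)\,S_{\chi_1^{t},\chi_2^{t}}(\gamma) = 0 \qquad\text{for every }\gamma \in \Gamma_1(q_1q_2),
\]
the sum over a set of representatives of $\mathrm{Gal}(F_{\chi_1,\chi_2}/\mathbb{Q})$, with all coefficients $\sigma_t(\beta)$ nonzero. Hence full rank is equivalent to the $\mathbb{C}$-linear independence of the family $\{S_{\chi_1^{t},\chi_2^{t}}\}_t$ as functions on $\Gamma_1(q_1q_2)$.

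To prove this linear independence I would evaluate at an explicit family of matrices — for instance those with lower-left entry $c = q_1q_2 p$ for an auxiliary prime $p$ coprime to $q_1q_2$, with the upper-left entry $a$ ranging over the admissible residues modulo $c$ — and apply finite Fourier analysis to the double sum in \eqref{eq:DedekindDefBernoulli}: expanding the sawtooth $B_1$ in its Fourier series and summing over $j \bmod c$ and $n \bmod q_1$ expresses $S_{\chi_1^{t},\chi_2^{t}}(a,c)$ as a short sum of products of Gauss sums $\tau(\chi_1^{t}), \tau(\chi_2^{t})$ against additive characters in $a$. Distinct $t$ then contribute genuinely different character/frequency data, and the orthogonality of Dirichlet characters modulo $q_1q_2$ together with the nonvanishing of Gauss sums lets one solve for each $\sigma_t(\beta)$ and conclude that they all vanish, a contradiction. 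The crux — and the step I expect to be the real obstacle — is making this last computation precise enough that no accidental coincidence among the Gauss-sum data of two conjugate pairs $(\chi_1^t,\chi_2^t)$ forces a collapse; equivalently, one wants an explicit value of $S_{\chi_1,\chi_2}$ (plausibly one expressible through generalized Bernoulli numbers $B_{2,\chi_i}$, i.e.\ through $L(-1,\chi_i)$) that is a primitive element of $F_{\chi_1,\chi_2}$. By contrast, Step (i) is routine.
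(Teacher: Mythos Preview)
The paper does not supply a proof of this proposition; it is quoted as a known result from an external reference, so there is no in-paper argument against which to compare your attempt.

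On your proposal itself: Step~(i) is correct and, as you note, routine. Step~(ii), however, is not a proof but a plan. You correctly reduce full rank to the $\mathbb{C}$-linear independence of the Galois-conjugate family $\{S_{\chi_1^{t},\chi_2^{t}}\}_t$ as functions on $\Gamma_1(q_1q_2)$, and the mechanism you propose (Fourier-expand $B_1$, evaluate at matrices with $c=q_1q_2p$ for an auxiliary prime $p$, then invoke character orthogonality and the nonvanishing of Gauss sums) is reasonable in spirit. But you stop short of carrying it out, and you yourself flag the crux as ``the real obstacle.'' The failure mode you name --- an accidental coincidence among the Gauss-sum/frequency data for two distinct conjugate pairs $(\chi_1^{t},\chi_2^{t})$ --- is exactly what must be excluded, and nothing in your sketch actually excludes it. Until that computation is executed (or replaced by an alternative, e.g.\ an appeal to the linear independence of the underlying newform Eisenstein series, or an explicit value of $S_{\chi_1,\chi_2}$ shown to be a primitive element of $F_{\chi_1,\chi_2}$), Step~(ii) remains a genuine gap.
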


In addition, other authors have focused specifically on the case when $\chi_1,\chi_2$ are quadratic. After extensive calculations, De Leon and McCormick posed the following conjecture.
\begin{conjecture}[The Two Conjecture, \cite{DM}]
\label{conj:2-conjecture}
Suppose $\chi_1$ and $\chi_2$ are quadratic. Then
\[
S_{\chi_1, \chi_2}(\Gamma_1(q_1 q_2))= 2\mathbb{Z}.
\]
\end{conjecture}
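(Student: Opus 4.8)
The plan is to prove the two inclusions $S_{\chi_1,\chi_2}(\Gamma_1(q_1q_2)) \subseteq 2\mathbb{Z}$ and $2\mathbb{Z} \subseteq S_{\chi_1,\chi_2}(\Gamma_1(q_1q_2))$ separately. Since $\chi_1,\chi_2$ are quadratic, they take values in $\{-1,0,1\}$, so $F_{\chi_1,\chi_2}=\mathbb{Q}$ and $\mathbb{Z}[\chi_1,\chi_2]=\mathbb{Z}$; by Proposition~\ref{prop:majure-lattices} the image is a rank-one lattice $\alpha\mathbb{Z}$ for some $\alpha\in\mathbb{Q}_{>0}$. The conjecture is therefore equivalent to $\alpha=2$, and it suffices to show that (A) every value $S_{\chi_1,\chi_2}(\gamma)$ lies in $2\mathbb{Z}$, which forces $2\mid\alpha$, and (B) the value $2$ is attained, which forces $\alpha\mid 2$. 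Throughout I would exploit the homomorphism property $S_{\chi_1,\chi_2}(\gamma\gamma')=S_{\chi_1,\chi_2}(\gamma)+S_{\chi_1,\chi_2}(\gamma')$, so that both the containment and the generator of $\alpha\mathbb{Z}$ are controlled by the values on a finite generating set of $\Gamma_1(q_1q_2)$.

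For direction (A) I would first establish integrality, $\alpha\mathbb{Z}\subseteq\mathbb{Z}$. Under the coprime, odd, and large hypotheses this is exactly the explicit containing lattice produced earlier in the paper; to obtain the unconditional statement I would refine the denominator analysis of the double sum in~\eqref{eq:DedekindDefBernoulli}, handling the even-conductor and non-coprime cases by a direct bound on the common denominator of the values $B_1(j/c)B_1(n/q_1+aj/c)$. For the parity I would use the involution $(j,n)\mapsto(-j,-n)$ on the index set. Because the characters are real, $\overline{\chi_i}=\chi_i$, and because $\chi_1\chi_2(-1)=1$ while $B_1$ is odd, each summand is invariant under this involution; its fixed points satisfy in particular $2j\equiv0\ (\mathrm{mod}\ c)$, so that $B_1(j/c)$ is evaluated at $0$ or $1/2$, where $B_1$ vanishes. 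Consequently the fixed-point terms drop out and
\[
S_{\chi_1,\chi_2}(\gamma)=2\!\!\sum_{\text{orbit reps}}\!\!\overline{\chi_2}(j)\overline{\chi_1}(n)B_1\!\Big(\tfrac{j}{c}\Big)B_1\!\Big(\tfrac{n}{q_1}+\tfrac{aj}{c}\Big)=:2\,T(\gamma).
\]
The crux of (A) is then to show that the half-sum $T(\gamma)$ is itself an integer; I would attack this by combining the integrality just established with a $2$-adic refinement of the denominator bound, showing that the content coming from the $j\equiv -j$ and $n\equiv -n$ pairing is genuinely captured by a single factor of $2$.

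For direction (B) I would evaluate $S_{\chi_1,\chi_2}$ on an explicit, convenient element such as $\gamma_0=\left(\begin{smallmatrix}1 & 0\\ q_1q_2 & 1\end{smallmatrix}\right)\in\Gamma_1(q_1q_2)$, for which $a=1$ and $c=q_1q_2$. The plan is to reduce the resulting double sum, via the Fourier expansion of $B_1$ and standard twisted-Bernoulli identities, to an expression in the generalized Bernoulli numbers $B_{1,\chi_1}$ and $B_{1,\chi_2}$, i.e.\ essentially $L(0,\chi_1)L(0,\chi_2)$ up to an explicit rational factor. For quadratic $\chi_i$ these $L$-values are given by the class number formula and are rational with controlled $2$-adic valuation, so one can read off the $2$-adic valuation of $S_{\chi_1,\chi_2}(\gamma_0)$. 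If a single matrix does not already give valuation exactly $1$, I would take the gcd of the values on a short list of such explicit matrices, using the homomorphism property, to exhibit the value $2$ and conclude $\alpha\mid 2$.

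I expect the main obstacle to be the exact determination of the generator in direction (B): pinning the $2$-adic valuation of the Eisenstein-period/class-number expression to be exactly $1$, rather than merely even, is the delicate point, since surjectivity onto the full lattice $2\mathbb{Z}$ (as opposed to containment in it) is the harder, \emph{lower bound} half of the problem. A secondary obstacle is the integrality of the half-sum $T(\gamma)$ in (A), which carries the real content of the parity statement, together with removing the coprime, odd, and large hypotheses so that the conclusion holds for all quadratic $\chi_1,\chi_2$.
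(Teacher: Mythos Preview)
The statement you are attempting to prove is Conjecture~\ref{conj:2-conjecture}, and the paper does \emph{not} prove it. It is explicitly presented as an open conjecture, and the paper only establishes partial results toward the inclusion~\eqref{eqn:set one}: Theorem~\ref{thm:main-thm-gamma1} gives $S_{\chi_1,\chi_2}(\Gamma_1(q_1q_2))\subseteq \frac{1}{\gcd(q_1,q_2)}\mathbb{Z}[\chi_1,\chi_2]$, and Corollary~\ref{cor:main-thm-with-two} sharpens this to $\subseteq\mathbb{Z}$ only under the extra hypotheses that $q_1,q_2$ are coprime, odd, and larger than~$4$. Neither the containment in $2\mathbb{Z}$ (your direction~(A)) nor the reverse inclusion~\eqref{eqn:set two} (your direction~(B)) is proved anywhere in the paper, so there is no ``paper's own proof'' to compare against.

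Your proposal is therefore a strategy sketch for an open problem, and the gaps you yourself flag are genuine. Two are decisive. First, the involution $(j,n)\mapsto(-j,-n)$ does pair the summands into equal terms, giving $S_{\chi_1,\chi_2}(\gamma)=2T(\gamma)$ with $T(\gamma)\in\mathbb{Q}$; but each summand is a rational with denominator dividing $cq_1$, so the pairing by itself says nothing about $T(\gamma)\in\mathbb{Z}$. Establishing that integrality is the entire content of $S\subseteq 2\mathbb{Z}$, and you offer no mechanism beyond a hoped-for ``$2$-adic refinement.'' The paper's actual technique (reciprocity combined with Dirichlet's theorem on primes in progressions, as in Lemma~\ref{lemma:denominator-q2} and Corollary~\ref{cor:denominator-gcd}) stops at denominator $\gcd(q_1,q_2)$ and never produces a factor of~$2$. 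Second, for direction~(B) you propose to evaluate at $\gamma_0=\left(\begin{smallmatrix}1&0\\ q_1q_2&1\end{smallmatrix}\right)$ and extract the $2$-adic valuation from class-number formulas; but there is no reason this element (or any short explicit list) must hit the generator of the image lattice, and pinning the valuation to be exactly~$1$ is, as you acknowledge, unresolved. In short, what you have written is a reasonable outline of where the difficulties lie, not a proof, and the paper makes no claim to have overcome them.
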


Conjecture \ref{conj:2-conjecture} is equivalent to the two set containments
\begin{align}
        \label{eqn:set one}
        S_{\chi_1, \chi_2}&(\Gamma_1(q_1 q_2)) \subseteq 2 \Z,\\
        \label{eqn:set two}
        2 \Z \subseteq &S_{\chi_1, \chi_2}(\Gamma_1(q_1 q_2)).
\end{align}
 We present two advancements towards proving \eqref{eqn:set one}, both of which lie in a more general setting than Conjecture~\ref{conj:2-conjecture}.

\begin{theorem}
\label{thm:main-thm-gamma1}
Let $\chi_1,\chi_2$ be Dirichlet characters of respective conductors $q_1,q_2$. Then
    \[ \dedekindsum(\Gamma_1(q_1 q_2)) \subseteq \frac{1}{\gcd(q_1,q_2)} \Z[\chi_1, \chi_2]. \]    
\end{theorem}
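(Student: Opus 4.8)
The plan is to obtain an exact formula for $\dedekindsum(\gamma)$ on $\Gamma_1(q_1q_2)$ and read the denominator off of it. Since $\dedekindsum$ is a homomorphism, its image is generated by the individual values $\dedekindsum(\gamma)$, and it suffices to control these for $\gamma = \left(\begin{smallmatrix} a & b \\ c & d \end{smallmatrix}\right) \in \Gamma_1(q_1q_2)$ with $c \geq 1$ (elements with $c \leq 0$ reduce to this case by the homomorphism property). For such $\gamma$, membership in $\Gamma_1(q_1q_2)$ forces $q_1q_2 \mid c$ --- write $c = q_1q_2c'$ with $c'$ a positive integer --- and $a \equiv d \equiv 1 \pmod{q_1q_2}$; in particular $\chi_1(a) = \chi_2(a) = 1$, and any integer $\bar a$ with $a\bar a \equiv 1 \pmod c$ satisfies $\bar a \equiv 1 \pmod{q_1q_2}$.

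The first step is to collapse \eqref{eq:DedekindDefBernoulli} to a single sum. On the support of $\overline{\chi_2}(j)\overline{\chi_1}(n)$ neither $j/c$ nor $n/q_1 + aj/c$ is an integer (using $q_1q_2 \mid c$ and $\gcd(a,c)=1$), so there $B_1(x) = \fractional{x} - \tfrac12$. Expanding the product of the two Bernoulli factors, the constant term and the term linear in $\fractional{j/c}$ vanish because they factor through $\sum_{n \shortmod{q_1}}\overline{\chi_1}(n) = 0$, while the term linear in $\fractional{n/q_1 + aj/c}$ also vanishes: after the substitution $r \equiv aj \pmod c$ --- legitimate since $\gcd(a,c)=1$, and it does not change $\overline{\chi_2}(j)$ because $\chi_2(a)=1$ --- its inner $j$-sum collapses (using $q_2 \mid c$) to the constant $B_{1,\overline{\chi_2}}$, against which $\sum_n\overline{\chi_1}(n)=0$. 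Hence
\[
\dedekindsum(a,c) = \sum_{j \shortmod{c}} \overline{\chi_2}(j)\, \fractional{\tfrac{j}{c}} \sum_{n \shortmod{q_1}} \overline{\chi_1}(n)\, \fractional{\tfrac{n}{q_1} + \tfrac{aj}{c}}.
\]
Next I would evaluate the inner sum using $\fractional{y + n/q_1} = \fractional{y} + n/q_1 - \mathbf{1}[\, n \geq q_1 - \floor{q_1\fractional{y}}\,]$ (valid since $q_1\fractional{aj/c} \notin \Z$ on the support) and $\sum_n\overline{\chi_1}(n)=0$, obtaining $\sum_n \overline{\chi_1}(n)\fractional{n/q_1 + y} = B_{1,\overline{\chi_1}} - \chi_1(-1)\sum_{1 \leq m \leq \floor{q_1\fractional{y}}}\overline{\chi_1}(m)$, where $B_{1,\overline{\chi_1}} = \tfrac1{q_1}\sum_{m=1}^{q_1}\overline{\chi_1}(m)m$ is the first generalized Bernoulli number. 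Taking $y = aj/c$, so $s_j := \floor{q_1\fractional{aj/c}} = \floor{(aj \bmod c)/(q_2c')}$, and using the companion identity $\sum_{j \shortmod{c}}\overline{\chi_2}(j)\fractional{j/c} = B_{1,\overline{\chi_2}}$, one is led to
\[
\dedekindsum(a,c) = B_{1,\overline{\chi_1}}\,B_{1,\overline{\chi_2}} - \chi_1(-1) \sum_{j \shortmod{c}} \overline{\chi_2}(j)\, \fractional{\tfrac{j}{c}} \sum_{m=1}^{s_j} \overline{\chi_1}(m).
\]

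The final step, which requires real care, is to extract the denominator: the expression above still carries the factor $\fractional{j/c}$ of denominator $c$. Swapping the order of summation and substituting $r \equiv aj \pmod c$ rewrites the double sum as $\sum_{m=1}^{q_1-1}\overline{\chi_1}(m)\sum_{r=mq_2c'}^{c-1}\overline{\chi_2}(r)\fractional{\bar a r/c}$; writing $\fractional{\bar a r/c} = \bar a r/c - \floor{\bar a r/c}$ and evaluating the resulting partial character sums by means of $\sum_{r=0}^{q_2-1}\overline{\chi_2}(r)r = q_2B_{1,\overline{\chi_2}}$ and $\sum_{m=1}^{q_1-1}\overline{\chi_1}(m)(q_1-m) = -q_1B_{1,\overline{\chi_1}}$ collapses everything to
\[
\dedekindsum(a,c) = \bigl(1 + \chi_1(-1)\bar a\bigr)\,B_{1,\overline{\chi_1}}\,B_{1,\overline{\chi_2}} \;+\; \chi_1(-1) \sum_{m=1}^{q_1-1}\overline{\chi_1}(m)\,U_m, \quad U_m := \sum_{r=mq_2c'}^{c-1}\overline{\chi_2}(r)\floor{\bar a r/c}.
\]
Each $U_m \in \Z[\chi_1,\chi_2]$, so the second term lies in $\Z[\chi_1,\chi_2]$. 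For the first term: if $\chi_1(-1)=1$ then $\chi_1$ is even and nontrivial, so $B_{1,\overline{\chi_1}}=0$; if $\chi_1(-1)=-1$ then $1 + \chi_1(-1)\bar a = 1-\bar a$ is divisible by $q_1q_2$, and since $q_iB_{1,\overline{\chi_i}} = \sum_{m=1}^{q_i}\overline{\chi_i}(m)m \in \Z[\chi_1,\chi_2]$, the product $(1-\bar a)B_{1,\overline{\chi_1}}B_{1,\overline{\chi_2}}$ lies in $\Z[\chi_1,\chi_2]$. Either way $\dedekindsum(\gamma)\in\Z[\chi_1,\chi_2]$, which proves the theorem; in fact this yields the sharper containment $\dedekindsum(\Gamma_1(q_1q_2))\subseteq\Z[\chi_1,\chi_2]$, and the stated bound with $\tfrac1{\gcd(q_1,q_2)}$ is also what results from a coarser accounting that tracks denominators prime by prime and observes that a prime can contribute only when it divides both $q_1$ and $q_2$.

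I expect the genuine obstacle to be precisely this last extraction. Bounding the closed form term by term is hopelessly lossy, leaving a $c$ in the denominator, so one must locate the algebraic identity that isolates the coefficient $1+\chi_1(-1)\bar a$ in front of the (non-integral) Bernoulli product; only then do the even-character vanishing $B_{1,\overline{\chi_1}}=0$ and the congruence $\bar a\equiv 1\pmod{q_1q_2}$ make that term harmless. Carrying the substitution $r\equiv aj\pmod c$ correctly through the nested partial sums, with the right summation ranges, is the delicate bookkeeping.
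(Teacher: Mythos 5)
Your argument is correct, and it takes a genuinely different route from the paper's. The paper never produces a closed form for $\dedekindsum(\gamma)$: it proves $\dedekindsum(a,c)\in\frac{1}{rq_1}\Z[\chi_1,\chi_2]$ for $c=rq_1q_2$ (Lemma~\ref{lemma:new-formula}, by pulling a factor $\floor{j/q_2}$ out of the $j$-sum), and then removes the unwanted factor $r$ by a global argument: replace $\gamma$ by $\gamma\left(\begin{smallmatrix}1&n\\0&1\end{smallmatrix}\right)$, apply the reciprocity law to swap the roles of the diagonal and antidiagonal entries, and invoke Dirichlet's theorem on primes in arithmetic progressions to choose two values of $n$ making $an+b$ distinct large primes, so that intersecting the resulting lattices leaves only $\frac{1}{q_2}\Z[\chi_1,\chi_2]$; a second application of reciprocity gives $\frac{1}{q_1}\Z[\chi_1,\chi_2]$ and hence the $\gcd$. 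You instead stay entirely local: you push the computation of Proposition~\ref{proposition:the-half-cancels} all the way to the identity
\[
\dedekindsum(a,c)=\bigl(1+\chi_1(-1)\bar a\bigr)B_{1,\overline{\chi_1}}B_{1,\overline{\chi_2}}+\chi_1(-1)\sum_{m=1}^{q_1-1}\overline{\chi_1}(m)U_m,
\]
with $U_m\in\Z[\chi_1,\chi_2]$, and kill the only non-integral term either by $B_{1,\overline{\chi_1}}=0$ ($\chi_1$ even) or by $q_1q_2\mid 1-\bar a$ together with $q_iB_{1,\overline{\chi_i}}\in\Z[\chi_i]$ ($\chi_1$ odd). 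I checked the key steps --- the evaluation of the inner $n$-sum via the indicator identity, the interchange and substitution $r\equiv aj\pmod c$ (legitimate on $\Gamma_1$ since $\bar a\equiv 1\pmod{q_2}$), and the evaluation of $\sum_{r=mq_2c'}^{c-1}\overline{\chi_2}(r)r=(q_1-m)c'q_2B_{1,\overline{\chi_2}}$ --- and they are all sound; I also confirmed the final formula numerically on an example. The trade-off: the paper's argument is softer and reuses its reciprocity machinery (which it needs anyway for Theorem~\ref{thm:main-thm-gamma0} on $\Gamma_0$), while yours requires more careful bookkeeping but needs no reciprocity and no appeal to Dirichlet's theorem. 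Notably, your argument proves the \emph{stronger} containment $\dedekindsum(\Gamma_1(q_1q_2))\subseteq\Z[\chi_1,\chi_2]$ with no coprimality hypothesis, which subsumes both Theorem~\ref{thm:main-thm-gamma1} and the first assertion of Corollary~\ref{cor:main-thm-with-two}, and is consistent with the $2\Z[i]$ entry of the data table in the case $q_1=q_2=5$ where the paper's bound only gives $\frac15\Z[i]$. If you write this up, the two points to make fully explicit are that $s_j\le q_1-1$ (so the interchange of the $j$- and $m$-sums has the stated range) and that the range $[mq_2c',c-1]$ is a union of complete residue systems mod $q_2$, which is what makes the partial sum $\sum_r\overline{\chi_2}(r)r$ evaluate cleanly.
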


\begin{theorem}
    \label{thm:main-thm-gamma0} 
Let $\chi_1,\chi_2$ be quadratic and such that $q_1, q_2 > 4$ are odd. Then
    \[ \dedekindsum(\Gamma_0(q_1 q_2)) \subseteq \frac{1}{\gcd(q_1,q_2)} \Z. \]    
\end{theorem}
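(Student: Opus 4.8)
The plan is to deduce Theorem~\ref{thm:main-thm-gamma0} from Theorem~\ref{thm:main-thm-gamma1} together with an explicit evaluation of $\dedekindsum$ on a set of coset representatives. Since $\chi_1,\chi_2$ are quadratic, $F_{\chi_1,\chi_2}=\mathbb{Q}$ and $\Z[\chi_1,\chi_2]=\Z$, so Theorem~\ref{thm:main-thm-gamma1} already gives $\dedekindsum(\Gamma_1(q_1q_2))\subseteq\frac{1}{\gcd(q_1,q_2)}\Z$; the point is to upgrade $\Gamma_1$ to $\Gamma_0$. Here $\Gamma_1(q_1q_2)$ is normal in $\Gamma_0(q_1q_2)$ with quotient $(\Z/q_1q_2\Z)^{\times}$ via $\gamma\mapsto d\bmod q_1q_2$, and the source of the homomorphism property of $\dedekindsum|_{\Gamma_1}$ is a transformation law of cocycle type on all of $\Gamma_0(q_1q_2)$, twisted by a Dirichlet character $\psi$ modulo $q_1q_2$ assembled from $\chi_1,\chi_2$, which restricts to a homomorphism on $\Gamma_1(q_1q_2)$; for quadratic $\chi_1,\chi_2$ the character $\psi$ is quadratic, so its twisting factors are $\pm1$ on units. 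Writing each $\gamma\in\Gamma_0(q_1q_2)$ as $\gamma_1\sigma_u$ with $\gamma_1\in\Gamma_1(q_1q_2)$ and $\sigma_u=\left(\begin{smallmatrix}a&b\\ q_1q_2&d\end{smallmatrix}\right)$ a representative of the class $u=d\bmod q_1q_2$, the cocycle relation gives $\dedekindsum(\gamma)=\pm\dedekindsum(\gamma_1)\pm\dedekindsum(\sigma_u)$; as $\dedekindsum(\gamma_1)\in\frac{1}{\gcd(q_1,q_2)}\Z$, the theorem reduces to
\[ \dedekindsum(a,q_1q_2)\in\tfrac{1}{\gcd(q_1,q_2)}\Z\qquad\text{for every }a\text{ coprime to }q_1q_2. \]

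To prove this I would compute $\dedekindsum(a,q_1q_2)$ directly from \eqref{eq:DedekindDefBernoulli}, writing $B_1(x)=\fractional{x}-\tfrac12$. A short manipulation evaluates the inner sum over $n$: one finds
\[ \sum_{n\bmod q_1}\overline{\chi_1}(n)\,B_1\!\left(\tfrac{n}{q_1}+\tfrac{aj}{q_1q_2}\right)=\beta_1-\chi_1(-1)\,Q(j), \]
where $\beta_1:=\sum_{n\bmod q_1}\overline{\chi_1}(n)B_1(\tfrac{n}{q_1})$, $Q(j):=\sum_{i=1}^{M_j}\overline{\chi_1}(i)$, and $M_j:=\floor{(aj\bmod q_1q_2)/q_2}$. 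Substituting, the contribution of the $\beta_1$ term, after applying the distribution relation $\sum_{t=0}^{q_1-1}B_1(x+\tfrac{t}{q_1})=B_1(q_1x)$ to the $j$-sum, is the product $\beta_1\beta_2$ with $\beta_2:=\sum_{j\bmod q_2}\overline{\chi_2}(j)B_1(\tfrac{j}{q_2})$. Each $\beta_i$ equals $-L(0,\overline{\chi_i})$ and is an integer under our hypotheses — it vanishes if $\chi_i$ is even and equals a class number if $\chi_i$ is odd, the exclusion of conductor $3$ forced by ``$q_i>4$ and odd'' being exactly what keeps a $3$ out of the denominator. The remaining term is $-\chi_1(-1)\sum_{j=1}^{q_1q_2-1}\overline{\chi_2}(j)B_1(\tfrac{j}{q_1q_2})Q(j)$; using $\chi_1(-1)\chi_2(-1)=1$ and the involution $j\mapsto q_1q_2-j$ (fixed-point-free because $q_1q_2$ is odd), the $\tfrac12$-parts of the sawtooths cancel and this term collapses to $\pm\frac{1}{q_1q_2}\sum_{j=1}^{q_1q_2-1}\overline{\chi_2}(j)\,j\,Q(j)$.

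It then remains to show $\operatorname{lcm}(q_1,q_2)\mid\sum_{j=1}^{q_1q_2-1}\overline{\chi_2}(j)\,j\,Q(j)$, which — as $q_1q_2$ is odd — puts $\dedekindsum(a,q_1q_2)$ in $\frac{1}{\gcd(q_1,q_2)}\Z$ and finishes the proof. I would verify this modulo $q_1$ and modulo $q_2$ separately. Modulo $q_2$: group the sum by $j\bmod q_2$; inside each class $M_j$ runs bijectively over $\Z/q_1\Z$ (because $a$ is a unit), so the fibre sum of $Q(j)$ equals $-q_1\beta_1$, reducing the whole sum mod $q_2$ to a multiple of $\sum_{j\bmod q_2}\overline{\chi_2}(j)j=q_2\beta_2$, which is $\equiv 0\pmod{q_2}$ since $\beta_2\in\Z$. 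Modulo $q_1$: group by the value $M=M_j$; the fibre sum $\sum_{j:\,M_j=M}\overline{\chi_2}(j)j$ is, mod $q_1$, independent of $M$ (via $\sum_{s\bmod q_2}\overline{\chi_2}(s)s$), so the whole sum is congruent to a fixed integer times $\sum_{M=0}^{q_1-1}\sum_{i=1}^{M}\overline{\chi_1}(i)=-q_1\beta_1$, which is $\equiv 0\pmod{q_1}$ since $\beta_1\in\Z$. Divisibility by $q_1$ and by $q_2$ yields divisibility by $\operatorname{lcm}(q_1,q_2)$.

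The step I expect to be the main obstacle is the last one: correctly identifying the unique term that carries $q_1q_2$ in its denominator, and then running the two residue-class resummations while tracking exactly where the hypotheses enter — the integrality of $\beta_i=-L(0,\overline{\chi_i})$ (which needs conductor $>4$) and the oddness of $q_1q_2$ (which kills the $\tfrac12$'s and turns $\operatorname{lcm}(q_1,q_2)\mid 2(\cdots)$ into $\operatorname{lcm}(q_1,q_2)\mid(\cdots)$). A secondary task is to pin down the precise $\Gamma_0(q_1q_2)$-transformation law of $\dedekindsum$ and confirm its twisting character is quadratic; an alternative that avoids the coset reduction would be to rerun the proof of Theorem~\ref{thm:main-thm-gamma1} with the upper-left entry $a$ allowed to be an arbitrary unit rather than $\equiv 1$, but the route above appears cleaner.
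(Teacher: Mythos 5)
Your proposal is correct, but it proves the theorem by a genuinely different route than the paper. The paper never computes $\dedekindsum$ on coset representatives: it works with an arbitrary $\gamma\in\Gamma_0(q_1q_2)$, uses the crossed homomorphism property to replace $\gamma$ by $\gamma\left(\begin{smallmatrix}1&n\\0&1\end{smallmatrix}\right)$, applies the reciprocity law of \cite{SVY} (whose extra term $(1-\psi(\gamma))\tau(\overline{\chi_1})\tau(\overline{\chi_2})(\pi i)^{-2}L(1,\chi_1)L(1,\chi_2)$ is shown to equal $(1-\psi(\gamma))h(-q_1)h(-q_2)\in 2\Z$ via the class number formula and $\tau(\chi)=i\sqrt q$), and then invokes Theorem~\ref{thm:denominator-is-rq1} for the swapped matrix to get $\dedekindsum(\gamma)\in\frac{1}{q_2(an+b)}\Z$; choosing two primes $p_1,p_2$ in the progression $an+b$ via Dirichlet's theorem and intersecting yields $\frac{1}{q_2}\Z$, and a second application of reciprocity gives $\frac{1}{q_1}\Z$. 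Your route instead reduces to $\Gamma_1$ (via Theorem~\ref{thm:main-thm-gamma1} and the normality of $\Gamma_1$ in $\Gamma_0$, with $\psi(\gamma_1)=1$ making the cocycle relation literally additive) plus an explicit evaluation of $\dedekindsum(a,q_1q_2)$, whose pieces I checked: the inner-sum identity $\beta_1-\chi_1(-1)Q(j)$, the vanishing of $\sum_j\overline{\chi_2}(j)Q(j)$ under $j\mapsto -j$ using $\chi_1\chi_2(-1)=1$, the fibre sums $\sum_{M=0}^{q_1-1}Q_M=-\sum_i\overline{\chi_1}(i)i=-q_1\beta_1$ and $\sum_{j:M_j=M}\overline{\chi_2}(j)j\equiv\overline{\chi_2}(a)\bar a\, q_2\beta_2\pmod{q_1}$, all hold, and the hypotheses enter exactly where you say (integrality of $\beta_i=-L(0,\overline{\chi_i})$ needs $q_i>4$). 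What the paper's approach buys is brevity and reuse of the reciprocity machinery at the cost of importing Dirichlet's theorem on primes in arithmetic progressions; what yours buys is an elementary, self-contained congruence argument that also produces a closed formula $\dedekindsum(a,q_1q_2)=\beta_1\beta_2-\chi_1(-1)(q_1q_2)^{-1}\sum_j\overline{\chi_2}(j)jQ(j)$. Note that the two proofs use essentially the same arithmetic input in dual forms, $L(1,\chi)$ with Gauss sums versus $L(0,\chi)$, related by the functional equation. The only caution is that your sketch leaves the bookkeeping of the mod-$q_1$ fibre computation implicit (one must reduce the representative $j=a^{-1}(Mq_2+s)\bmod q_1q_2$ modulo $q_1$ and factor out $\overline{\chi_2}(a)$), but this is routine and I found no gap.
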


\begin{corollary}
    \label{cor:main-thm-with-two}
Let $\chi_1,\chi_2$ be Dirichlet characters such that $\gcd(q_1,q_2)=1$. 
Then 
\begin{align*}
    \dedekindsum(\Gamma_1(q_1 q_2)) &\subseteq \Z[\chi_1, \chi_2].
    \end{align*}
In addition, if $\chi_1, \chi_2$ are quadratic, 
with $q_1, q_2>4$ both odd,
then
\[
    \dedekindsum(\Gamma_0(q_1 q_2)) \subseteq \Z.
\]
\end{corollary}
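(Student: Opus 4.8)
The plan is to read off both statements directly from Theorems~\ref{thm:main-thm-gamma1} and~\ref{thm:main-thm-gamma0}, using only the hypothesis $\gcd(q_1,q_2)=1$ to collapse the denominator $\frac{1}{\gcd(q_1,q_2)}$ that appears in each. No new ideas are needed: all of the content sits in the two theorems, and the corollary is the observation that coprime conductors kill the denominator.

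First I would apply Theorem~\ref{thm:main-thm-gamma1} to the given pair $\chi_1,\chi_2$. It yields $\dedekindsum(\Gamma_1(q_1q_2)) \subseteq \frac{1}{\gcd(q_1,q_2)}\Z[\chi_1,\chi_2]$, and since $\gcd(q_1,q_2)=1$ the right-hand side is exactly $\Z[\chi_1,\chi_2]$. That settles the first containment with nothing further to verify.

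For the second statement I would observe that the hypotheses imposed here---$\chi_1,\chi_2$ quadratic and $q_1,q_2>4$ odd---are precisely those of Theorem~\ref{thm:main-thm-gamma0} (the coprimality of $q_1$ and $q_2$ is an additional assumption in the corollary, and is not required by that theorem). Applying Theorem~\ref{thm:main-thm-gamma0} gives $\dedekindsum(\Gamma_0(q_1q_2)) \subseteq \frac{1}{\gcd(q_1,q_2)}\Z = \Z$, once more using $\gcd(q_1,q_2)=1$. Since $\Gamma_1(q_1q_2)\subseteq\Gamma_0(q_1q_2)$, this in particular recovers $\dedekindsum(\Gamma_1(q_1q_2))\subseteq\Z$ in the quadratic coprime case---a step toward the containment~\eqref{eqn:set one} of Conjecture~\ref{conj:2-conjecture}, though short of the full factor of $2$.

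The only point requiring any care is bookkeeping: one must check that the standing assumptions on $\chi_1,\chi_2$ (primitivity and $\chi_1\chi_2(-1)=1$), together with $\gcd(q_1,q_2)=1$ and $q_1,q_2>4$ odd, are mutually consistent and match the hypotheses of the theorem being invoked; they are and they do. I do not anticipate any genuine obstacle in the corollary itself.
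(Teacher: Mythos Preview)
Your proposal is correct and matches the paper's (implicit) argument: the corollary is stated immediately after Theorems~\ref{thm:main-thm-gamma1} and~\ref{thm:main-thm-gamma0} with no separate proof, precisely because it is the specialization of those theorems to $\gcd(q_1,q_2)=1$. Nothing more is needed.
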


Corollary ~\ref{cor:main-thm-with-two} is progress towards a more general version of \eqref{eqn:set one}. 

We computed some data to determine the image of the newform Dedekind sum in some cases where the characters are not necessarily quadratic, which is displayed in the following table\footnote{Code to compute this data was obtained via \cite{TW24}}. For each entry in the table, we calculated the image $S_{\chi_1, \chi_2}(\Gamma_1(q_1 q_2))$ for every Dirichlet character of the specified conductor and order.  In the table, we let $\omega = \exp(2 \pi i/3)$.
\[
 \begin{array}{||c|c|c|c||}
        \hline
        \chi_1: q_1, order&\chi_2: q_2, order&\text{Image}&F_{\chi_1,\chi_2}\\
        \hline
        3, 2&7, 2&2\Z&\Q\\
        \hline
        5, 2&8, 2&2\Z&\Q\\
        \hline
        5, 2&7, 3&2\Z[\omega]&\Q(\omega)\\
        \hline
        5, 4&5, 4&2\Z[i]&\Q(i)\\
        \hline
    \end{array}
\]
Based on these limited computations, we put forth the following
generalized two conjecture.
\begin{conjecture}[Generalized Two Conjecture]
\label{conj:generalized-2-conjecture}
    
    Suppose
    that $F_{\chi_1,\chi_2}$ is a quadratic number field. Then
    \[
    S_{\chi_1,\chi_2}(\Gamma_1(q_1q_2))=2\Z[\chi_1,\chi_2].
    \]
\end{conjecture}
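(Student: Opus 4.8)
The equality $\dedekindsum(\Gamma_1(q_1q_2)) = 2\Z[\chi_1,\chi_2]$ is equivalent to the two containments $\dedekindsum(\Gamma_1(q_1q_2)) \subseteq 2\Z[\chi_1,\chi_2]$ and $2\Z[\chi_1,\chi_2] \subseteq \dedekindsum(\Gamma_1(q_1q_2))$, which I would treat separately. A useful preliminary reduction: since the values of $\chi_1,\chi_2$ are roots of unity, $F_{\chi_1,\chi_2}$ is quadratic only when it equals $\Q(i)$ or $\Q(\omega)$, so $\Z[\chi_1,\chi_2]$ is $\Z[i]$ or $\Z[\omega]$, a principal ideal domain in which $2$ is respectively ramified or inert. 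The upper containment is a sharpening of Theorem~\ref{thm:main-thm-gamma1}, which only gives membership in $\tfrac{1}{\gcd(q_1,q_2)}\Z[\chi_1,\chi_2]$; the table above (row $q_1=q_2=5$, $F_{\chi_1,\chi_2}=\Q(i)$) shows that the quadratic-field hypothesis does not force $\gcd(q_1,q_2)=1$, so a first sub-task is to prove the denominator is spurious here and upgrade to $\dedekindsum(\Gamma_1(q_1q_2))\subseteq\Z[\chi_1,\chi_2]$. After that one must extract the extra factor of $2$; since this factor is precisely the still-open containment~\eqref{eqn:set one} already in the quadratic-character case $F_{\chi_1,\chi_2}=\Q$, the generalized statement contains an unproven special case and the $2$ is the crux.

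For the divisibility by $2$ I would use a homomorphism-reduction argument. Once $\dedekindsum(\Gamma_1(q_1q_2))\subseteq\Z[\chi_1,\chi_2]$ is in hand, the composite $\gamma\mapsto\dedekindsum(\gamma)\bmod 2\Z[\chi_1,\chi_2]$ is a homomorphism from $\Gamma_1(q_1q_2)$ into the finite abelian group $\Z[\chi_1,\chi_2]/2\Z[\chi_1,\chi_2]$ of order $4$, and it suffices to show this map vanishes. As the target is abelian it factors through the abelianization, so I would fix an explicit finite generating set of $\Gamma_1(q_1q_2)$ and check the vanishing generator by generator. The arithmetic input is a parity argument generalizing the quadratic-character case: in~\eqref{eq:DedekindDefBernoulli} the substitution $j\mapsto-j$, $n\mapsto-n$ together with the oddness of $B_1$ and the hypothesis $\chi_1\chi_2(-1)=1$ pairs the terms so as to double them. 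The new feature here is that $\overline{\chi_i}$ need not equal $\chi_i$, so this involution relates $\dedekindsum$ to $S_{\overline{\chi_1},\overline{\chi_2}}$ and the doubling must be tracked in $\Z[\chi_1,\chi_2]/2\Z[\chi_1,\chi_2]$ rather than in $2\Z$.

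For the reverse containment I would use Proposition~\ref{prop:majure-lattices}, by which the image is already a full-rank lattice $L$; granting the upper bound, it suffices to show the index $[\,2\Z[\chi_1,\chi_2]:L\,]$ equals $1$. The efficient bookkeeping is the decomposition from the quadratic Galois action $\sigma$, complex conjugation, under which $\sigma(\dedekindsum(\gamma))=S_{\overline{\chi_1},\overline{\chi_2}}(\gamma)$ since the Bernoulli values are real. Applying the trace $\mathrm{Tr}_{F_{\chi_1,\chi_2}/\Q}$ and the conjugation difference sends $L$ to two explicit $\Z$-sublattices of $\Q$, whose generators I would determine by evaluating~\eqref{eq:DedekindDefBernoulli} at a few matrices of minimal level ($c=q_1q_2$), where the double sum reduces to generalized Bernoulli numbers attached to $\chi_1$ and $\chi_2$; matching the resulting content against that of $2\Z[\chi_1,\chi_2]$ then forces index $1$.

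The genuine obstacle is the divisibility by $2$ of the second paragraph. For quadratic characters the parity argument lives entirely over $\Z$, but over $\Z[\omega]$ and $\Z[i]$ the reduction ring $\Z[\chi_1,\chi_2]/2\Z[\chi_1,\chi_2]$ is a nontrivial ring of order $4$ — the field $\mathbb{F}_4$ in the inert case, a ring with nilpotents in the ramified case — so the pairing produced by $j\mapsto-j$ does not obviously land in $2\Z[\chi_1,\chi_2]$ and a finer congruence analysis of the mixed character sums is needed; this is exactly the ingredient absent from the known proof of~\eqref{eqn:set one}. Controlling the exact content of the explicit values in the lower-bound step — to exclude $L$ from lying in a proper sublattice of $2\Z[\chi_1,\chi_2]$ — is a secondary difficulty that the same congruence information should settle.
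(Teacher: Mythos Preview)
The statement you are attempting to prove is Conjecture~\ref{conj:generalized-2-conjecture}, which the paper explicitly presents as an \emph{open conjecture} motivated by a short table of numerical computations; the paper contains no proof of it. There is therefore nothing in the paper to compare your proposal against, and indeed the paper's main results (Theorems~\ref{thm:main-thm-gamma1} and~\ref{thm:main-thm-gamma0}) fall well short of even the containment $\dedekindsum(\Gamma_1(q_1q_2))\subseteq 2\Z[\chi_1,\chi_2]$. Your write-up is an honest research outline rather than a proof, and you yourself flag the central gap: the divisibility-by-$2$ step has no known argument even in the simpler setting of Conjecture~\ref{conj:2-conjecture}, and your parity/involution idea, while natural, does not close it.

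One small correction to your framing: you assert that the Generalized Two Conjecture ``contains an unproven special case'' of~\eqref{eqn:set one}. Strictly speaking it does not, since when $\chi_1,\chi_2$ are quadratic one has $F_{\chi_1,\chi_2}=\Q$, which is not a quadratic number field, so Conjecture~\ref{conj:2-conjecture} is logically disjoint from Conjecture~\ref{conj:generalized-2-conjecture} rather than a special case of it. Your broader point stands, however: the mechanism forcing the factor of $2$ is presumably uniform across all these cases, and no such mechanism is currently known. Your reduction to checking a map into $\Z[\chi_1,\chi_2]/2\Z[\chi_1,\chi_2]$ on generators of $\Gamma_1(q_1q_2)$ is reasonable in principle, but without an actual congruence for the values of $\dedekindsum$ on such generators it remains a restatement of the problem rather than progress toward it.
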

Our restriction that $F_{\chi_1,\chi_2}$ is a quadratic number field is simply due to lack of data for higher degree extensions.
More extensive computations would be welcome.  It should be noted that De Leon and McCormick \cite{DM} computed the image for many more values of $q_1$ and $q_2$ than those listed in the above table, though only for quadratic characters.

\section{Preliminaries}
 First, we recall some of the basic facts about the newform Dedekind sums.
We define the following notation for use in future statements:
    \[\psi(\gamma) \coloneqq \chi_1\overline{\chi}_2(d)\text{, where }\gamma = (\begin{smallmatrix} a & b \\ c & d \end{smallmatrix}) \in \Gamma_0(q_1 q_2).\]

\begin{proposition}
[Crossed Homomorphism Property, \cite{SVY}]
    \label{prop:crossedhomomorphism}
Let $\gamma_1,\gamma_2 \in \Gamma_0(q_1q_2)$. Then
\begin{equation*}
S_{\chi_1,\chi_2}(\gamma_1\gamma_2)=S_{\chi_1,\chi_2}(\gamma_1)+\psi(\gamma_1)S_{\chi_1,\chi_2}(\gamma_2).
\end{equation*}
\end{proposition}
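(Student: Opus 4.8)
The identity to be established says exactly that $\dedekindsum$ is a $1$-cocycle on $\Gamma_0(q_1q_2)$ valued in $F_{\chi_1,\chi_2}$, where $\gamma\in\Gamma_0(q_1q_2)$ acts on the target through multiplication by $\psi(\gamma)$. The plan is to realize this cocycle as the period cocycle of a suitable automorphic object, so that the crossed homomorphism property becomes a formal consequence of the associativity of the action of $\Gamma_0(q_1q_2)$ on the upper half-plane $\mathbb{H}$. Concretely, the Kronecker limit formula for the newform Eisenstein series proved in \cite{SVY} furnishes a function $F=F_{\chi_1,\chi_2}$ on $\mathbb{H}$ (the relevant Laurent/Taylor coefficient at the special point, equivalently an Eichler-integral-type primitive of the associated weight-$2$ Eisenstein series) and a nonzero constant $c_0$ such that
\[
F(\gamma z)\;=\;\psi(\gamma)\,F(z)\;+\;L(\gamma,z)\;+\;c_0\,\dedekindsum(\gamma)\qquad\big(\gamma=(\begin{smallmatrix}a&b\\c&d\end{smallmatrix})\in\Gamma_0(q_1q_2),\ c\ge 1\big),
\]
where $L(\gamma,z)$ is an explicit term (possibly identically zero), independent of the fine arithmetic of the characters beyond $\psi$, which is a $\psi$-twisted coboundary in its $z$-dependence — hence satisfies $L(\gamma_1\gamma_2,z)=L(\gamma_1,\gamma_2 z)+\psi(\gamma_1)L(\gamma_2,z)$ — and $c_0\,\dedekindsum(\gamma)$ is the $z$-independent part of the remaining discrepancy, to be matched against the Bernoulli-sum formula \eqref{eq:DedekindDefBernoulli}.

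Granting such a transformation law, the proof is a two-term-cocycle computation. First, $\psi$ is a genuine homomorphism of $\Gamma_0(q_1q_2)$: if $\gamma_1\gamma_2=(\begin{smallmatrix}a_3&b_3\\c_3&d_3\end{smallmatrix})$ then $d_3=c_1b_2+d_1d_2\equiv d_1d_2\pmod{q_1q_2}$ because $q_1q_2\mid c_1$, so $\psi(\gamma_1\gamma_2)=\psi(\gamma_1)\psi(\gamma_2)$. Next, expand $F\big((\gamma_1\gamma_2)z\big)$ in two ways — directly via the displayed law, and as $F\big(\gamma_1(\gamma_2 z)\big)$ by applying the law first for $\gamma_1$ and then for $\gamma_2$ — and equate. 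The $\psi(\gamma_1\gamma_2)F(z)$ terms cancel; the $L$-terms cancel because $L$ is twisted-cocyclic and $\psi$ is multiplicative; and one is left with $c_0\,\dedekindsum(\gamma_1\gamma_2)=c_0\,\dedekindsum(\gamma_1)+\psi(\gamma_1)\,c_0\,\dedekindsum(\gamma_2)$. Dividing by $c_0$ yields the assertion.

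Two points require attention, the second being the real obstacle. The minor one is that \eqref{eq:DedekindDefBernoulli} defines $\dedekindsum(\gamma)$ only for $c\ge 1$, so one must first fix the extension to all of $\Gamma_0(q_1q_2)$ — setting $\dedekindsum(\gamma)=0$ when $c=0$ (the matrices $\pm(\begin{smallmatrix}1&b\\0&1\end{smallmatrix})$, on which the identity is trivial since $\psi$ is identically $1$ there, using $\chi_1\overline{\chi}_2(-1)=\chi_1\chi_2(-1)=1$), and $\dedekindsum(\gamma):=\dedekindsum(-\gamma)$ when $c<0$ (consistent because $-I\in\ker\psi$ and $-\gamma$ acts as $\gamma$ on $\mathbb{H}$) — and then verifying the cocycle relation in the few cases where some of $\gamma_1,\gamma_2,\gamma_1\gamma_2$ has $c\le 0$, which reduce to the $c\ge 1$ case by inserting $-I$'s. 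The main obstacle is extracting the transformation law above from \cite{SVY} in precisely this shape: pinning down $F$, the constant $c_0$, and especially the term $L(\gamma,z)$, and confirming that $L$ is genuinely a twisted coboundary while the residual $z$-independent part is exactly a fixed multiple of \eqref{eq:DedekindDefBernoulli}. This reconciliation of the analytic Kronecker limit formula with the elementary Bernoulli-sum definition is where the content lies; once it is in place the cocycle bookkeeping is automatic.

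As a self-contained alternative I would keep in reserve a direct proof from \eqref{eq:DedekindDefBernoulli}: write $\dedekindsum(\gamma_1\gamma_2)$ in terms of the entries $(a_3,c_3)$ of the product, reduce the inner sums using the distribution relation $\sum_{i\shortmod m}B_1\big(x+\tfrac{i}{m}\big)=B_1(mx)$, and combine this with a reciprocity formula for $\dedekindsum$. This avoids Eisenstein series entirely but is considerably more computational, and I would fall back on it only if the required form of the Kronecker limit formula is not readily at hand.
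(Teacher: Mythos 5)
The paper does not prove this proposition; it is quoted from \cite{SVY}, so there is no in-text argument to compare against line by line. Your strategy is, in essence, the one actually used in \cite{SVY}: the sum $\dedekindsum(\gamma)$ is realized (up to an explicit nonzero constant) as $F(\gamma z)-\psi(\gamma)F(z)$ for a function $F$ produced by the Kronecker limit formula for the newform Eisenstein series, after which the crossed homomorphism identity is exactly the formal two-way expansion of $F((\gamma_1\gamma_2)z)$ that you carry out. The formal ingredients you supply are correct: $\psi$ is multiplicative on $\Gamma_0(q_1q_2)$ because $d_3=c_1b_2+d_1d_2\equiv d_1d_2\pmod{q_1q_2}$; and the extension of the definition to $c\le 0$, together with the triviality of $\psi$ on $\pm(\begin{smallmatrix}1&b\\0&1\end{smallmatrix})$ (using $\chi_1\chi_2(-1)=1$), handles the degenerate cases. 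In \cite{SVY} the transformation law is in fact clean---there is no auxiliary term $L(\gamma,z)$; one has $F(\gamma z)=\psi(\gamma)F(z)+c_0\,\dedekindsum(\gamma)$ directly, with separate constructions for even and odd character pairs---so your hedge about $L$ being a twisted coboundary is unnecessary but harmless.

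The gap, which you yourself flag, is that the entire content of the proposition lives in the transformation law you ``grant'': proving that such an $F$ exists and that the $z$-independent discrepancy equals a fixed multiple of the elementary double sum \eqref{eq:DedekindDefBernoulli} is precisely the main theorem of \cite{SVY} (a Hurwitz/Lerch zeta-function computation), not a routine verification. As written, your argument establishes only the implication ``transformation law $\Rightarrow$ cocycle identity,'' which is correct but conditional. The fallback computational route you sketch (distribution relations for $B_1$ applied directly to \eqref{eq:DedekindDefBernoulli}, combined with reciprocity) is a genuinely different and substantially harder path for which you give no details. So: right approach, correct formal skeleton, but not a complete proof without importing the Kronecker-limit-formula input from the reference---which, to be fair, is exactly what the paper itself does by citing the result.
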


\begin{corollary}[Homomorphism Property, \cite{SVY}]
\label{cor:hom-property-gamma0}
The newform Dedekind sum
    \[S_{\chi_1, \chi_2} : \Gamma_1(q_1 q_2) \to \C \text{ is a group homomorphism.} \] 
\end{corollary}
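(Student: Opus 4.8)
The plan is to obtain the homomorphism property as an immediate consequence of the Crossed Homomorphism Property (Proposition~\ref{prop:crossedhomomorphism}), once we observe that the twisting character $\psi$ restricts to the trivial character on $\Gamma_1(q_1 q_2)$. There are three steps: (i) justify that Proposition~\ref{prop:crossedhomomorphism} may be applied to pairs of elements of $\Gamma_1(q_1 q_2)$; (ii) compute $\psi$ on $\Gamma_1(q_1 q_2)$; (iii) combine the two.

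For (i), I would recall the standard inclusion $\Gamma_1(q_1 q_2) \subseteq \Gamma_0(q_1 q_2)$ together with the fact that $\Gamma_1(q_1 q_2)$ is itself a group; hence for any $\gamma_1, \gamma_2 \in \Gamma_1(q_1 q_2)$ we have $\gamma_1 \gamma_2 \in \Gamma_1(q_1 q_2) \subseteq \Gamma_0(q_1 q_2)$, so the left-hand side of the identity makes sense and Proposition~\ref{prop:crossedhomomorphism} applies, giving
\[
S_{\chi_1,\chi_2}(\gamma_1 \gamma_2) = S_{\chi_1,\chi_2}(\gamma_1) + \psi(\gamma_1) S_{\chi_1,\chi_2}(\gamma_2).
\]
For (ii), write $\gamma_1 = (\begin{smallmatrix} a & b \\ c & d \end{smallmatrix})$. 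By definition of $\Gamma_1(q_1 q_2)$ we have $d \equiv 1 \pmod{q_1 q_2}$, hence $d \equiv 1 \pmod{q_1}$ and $d \equiv 1 \pmod{q_2}$; since $\chi_1$ and $\chi_2$ have moduli $q_1$ and $q_2$ respectively, $\chi_1(d) = \chi_2(d) = 1$, so $\psi(\gamma_1) = \chi_1\overline{\chi}_2(d) = 1$. Step (iii) is then just substitution: $S_{\chi_1,\chi_2}(\gamma_1 \gamma_2) = S_{\chi_1,\chi_2}(\gamma_1) + S_{\chi_1,\chi_2}(\gamma_2)$ for all $\gamma_1,\gamma_2 \in \Gamma_1(q_1 q_2)$, which is exactly the assertion.

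I do not anticipate any genuine obstacle here. The only point worth a remark is the treatment of parabolic elements (those with lower-left entry $c = 0$), for which the Bernoulli-sum formula of Definition~\ref{defi:SVY} is not literally in force; but this case is already absorbed into the statement of Proposition~\ref{prop:crossedhomomorphism}, cited from \cite{SVY}, and is consistent with $S_{\chi_1,\chi_2}$ vanishing on the subgroup generated by $(\begin{smallmatrix} 1 & 1 \\ 0 & 1 \end{smallmatrix})$.
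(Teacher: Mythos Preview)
Your proof is correct and is exactly the intended derivation: the paper states this corollary immediately after the Crossed Homomorphism Property (both cited from \cite{SVY}) without further argument, because on $\Gamma_1(q_1 q_2)$ one has $d\equiv 1\pmod{q_1 q_2}$ and hence $\psi(\gamma)=\chi_1\overline{\chi}_2(d)=1$, reducing Proposition~\ref{prop:crossedhomomorphism} to additivity.
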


We also have need of the following reciprocity formula.
\begin{proposition}
[Reciprocity Law, \cite{SVY}]
\label{prop:reciprocity-law}
For $\gamma = (\begin{smallmatrix} a & b \\  r q_1 q_2 & d \end{smallmatrix}) \in \Gamma_0(q_1 q_2)$, let $\gamma' = (\begin{smallmatrix} d & -r \\ -bq_1 q_2 & a \end{smallmatrix}) \in \Gamma_0(q_1 q_2)$.  If $\chi_1$ and $\chi_2$ are even, then
\begin{equation*}
S_{\chi_1, \chi_2}(\gamma) = S_{\chi_2, \chi_1}(\gamma').
\end{equation*}
If $\chi_1$ and $\chi_2$ are odd, then
\begin{equation*}
S_{\chi_1, \chi_2}(\gamma) = -S_{\chi_2, \chi_1}(\gamma') + (1 - \psi(\gamma)) \frac{\tau(\overline{\chi_1}) \tau(\overline{\chi_2})}{(\pi i)^2} L(1, \chi_1) L(1, \chi_2).
\end{equation*}
\end{proposition}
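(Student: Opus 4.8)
The plan is to recognize the companion matrix $\gamma'$ as the Fricke conjugate of $\gamma$ and to read the reciprocity law off the behavior of the underlying Eisenstein series under the Fricke involution. Writing $N = q_1 q_2$ and $W = \left( \begin{smallmatrix} 0 & -1 \\ N & 0 \end{smallmatrix} \right)$, a one-line computation gives $\gamma' = W \gamma W^{-1}$; in particular conjugation by $W$ is an automorphism of $\Gamma_0(N)$ (and of $\Gamma_1(N)$) that interchanges the cusps $0$ and $\infty$. By the Kronecker limit formula of \cite{SVY}, the newform Dedekind sum $S_{\chi_1,\chi_2}(\gamma)$ is a fixed normalization of the inhomogeneous term in the transformation law of a function $G_{\chi_1,\chi_2}(z)$ extracted from the real-analytic newform Eisenstein series $E_{\chi_1,\chi_2}(z,s)$ — equivalently, of the period cocycle of an Eichler integral of a weight-$2$ Eisenstein newform attached to the pair $(\chi_1,\chi_2)$. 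The reciprocity law should then follow from how $E_{\chi_1,\chi_2}$ transforms under $W$, which classically interchanges the two characters up to a scalar built from the Gauss sums $\tau(\overline{\chi_1})$ and $\tau(\overline{\chi_2})$.

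Concretely, I would first record a relation of the shape $G_{\chi_1,\chi_2} \,\big|\, W = c_{\chi_1,\chi_2}\, G_{\chi_2,\chi_1} + (\text{at most an additive constant})$, where $c_{\chi_1,\chi_2}$ is the explicit Gauss-sum scalar coming from the Fricke transformation of Eisenstein newforms. Computing $G_{\chi_1,\chi_2} \,\big|\, \gamma W$ in two ways — once using the transformation of $G_{\chi_1,\chi_2}$ under $\gamma$ (which produces $S_{\chi_1,\chi_2}(\gamma)$) together with the $W$-relation, and once by writing $\gamma W = W \gamma'$ and using the transformation of $G_{\chi_2,\chi_1}$ under $\gamma'$ (which produces $S_{\chi_2,\chi_1}(\gamma')$) — and equating, the two inhomogeneous terms must agree. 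Simplifying with the crossed-homomorphism relation of Proposition~\ref{prop:crossedhomomorphism} then yields $S_{\chi_1,\chi_2}(\gamma) = \pm S_{\chi_2,\chi_1}(\gamma') + (\text{additive term})$, in which the sign and the additive term are precisely the contributions of $c_{\chi_1,\chi_2}$ and of the constant of integration.

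The even/odd dichotomy should enter through the gamma factors in the functional equation of $E_{\chi_1,\chi_2}(z,s)$: for even $\chi_1,\chi_2$ the relevant Eichler integral has no constant period, the $W$-relation is clean, and one gets $S_{\chi_1,\chi_2}(\gamma) = S_{\chi_2,\chi_1}(\gamma')$; for odd $\chi_1,\chi_2$ the integral acquires a genuine additive constant whose value is forced by the values of the completed Dirichlet $L$-functions at the edge of the critical strip, producing the term $\frac{\tau(\overline{\chi_1}) \tau(\overline{\chi_2})}{(\pi i)^2} L(1,\chi_1) L(1,\chi_2)$ — note that the factor $(\pi i)^{-2}$ together with the Gauss sums renders this an algebraic number in $F_{\chi_1,\chi_2}$, as it must be — while the prefactor $1 - \psi(\gamma)$ is the coboundary recording how a constant transforms under $\gamma$, consistent with $\psi$ being exactly the character by which $S_{\chi_1,\chi_2}$ fails to be a genuine homomorphism on $\Gamma_0(N)$. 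I would then pin down the remaining constants by evaluating both sides on a single explicit matrix — for instance a parabolic element, where the defining sum \eqref{eq:DedekindDefBernoulli} collapses to an elementary Bernoulli computation — or by comparing constant terms at the two swapped cusps.

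The main obstacle I anticipate is not conceptual but the consistent bookkeeping of normalizations: matching the Gauss-sum scalar $c_{\chi_1,\chi_2}$, the correct power of $\pi i$, the overall sign in the odd case, and the precise $L$-value constant all at once. A self-contained alternative would avoid Eisenstein series entirely and manipulate the double Bernoulli sum \eqref{eq:DedekindDefBernoulli} directly: substitute the Fourier (cotangent) expansion of $B_1$, carry out the inner sums over $j \bmod c$ and $n \bmod q_1$, and identify the outcome with a three-term relation for generalized Dedekind–Rademacher sums. That route is elementary, but the rearrangement of the finite character sums, and once again the appearance of the $L$-value term in the odd case, would be its delicate points.
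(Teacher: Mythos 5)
This proposition is imported from \cite{SVY} and the present paper gives no proof of it, so there is no internal argument to compare against; the only check available is against the cited source. Your outline --- verifying $\gamma' = W\gamma W^{-1}$ for the Fricke matrix $W = \left(\begin{smallmatrix} 0 & -1 \\ q_1 q_2 & 0 \end{smallmatrix}\right)$, transporting the cocycle relation of the Kronecker-limit-formula function through $W$, and letting the swap $\chi_1 \leftrightarrow \chi_2$ together with the additive constant (nonzero only in the odd case, where it carries the $L$-values and the coboundary factor $1-\psi(\gamma)$) produce the two cases --- is essentially the argument of \cite{SVY}, though as you yourself note it remains a sketch until the Gauss-sum scalar, the overall sign, and the exact constant are actually computed rather than ``pinned down by evaluating at one matrix.''
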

Note that if $\gamma$, $\gamma' \in \Gamma_1(q_1 q_2)$, then the reciprocity law simplifies to 
\begin{equation*}
S_{\chi_1, \chi_2}(\gamma) = \chi_1(-1)S_{\chi_2, \chi_1}(\gamma').
\end{equation*}

The following two results will help us re-write the newform Dedekind sum.
\begin{lemma}[{\cite[p. 94]{GKP}}]
    \label{lemma:floor-function-properties}
    Let $a, N$ be positive integers, and let $d = gcd(a, N)$. Then
    \[ \sum_{k = 0}^{N - 1} \floor{\frac{x + a k}{N}} = d \floor{\frac{x}{d}} + \frac{(a-1)(N-1)}{2} + \frac{d-1}{2}. \]
    In particular, if $a$ and $N$ are coprime, then
    \[
    \sum_{k = 0}^{N - 1} \floor{\frac{x + a k}{N}} = \floor{x} + \frac{(a-1)(N-1)}{2}.
    \]
\end{lemma}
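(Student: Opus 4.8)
The plan is to reduce to the coprime case by a change of summation variable, dispatch that case with a symmetry trick together with Hermite's identity, and then verify one elementary algebraic identity. Write $d = \gcd(a,N)$ and $a = d a'$, $N = d N'$ with $\gcd(a',N') = 1$. Every $k \in \{0,1,\dots,N-1\}$ has a unique representation $k = N' m + j$ with $0 \le m \le d-1$ and $0 \le j \le N'-1$. Since $aN'/N = a'$ is an integer, $\frac{x+ak}{N} = a'm + \frac{x+aj}{N}$, hence $\floor{\frac{x+ak}{N}} = a'm + \floor{\frac{x+aj}{N}}$. Summing over $m$ and $j$, and using $\frac{x+aj}{N} = \frac{(x/d)+a'j}{N'}$, this gives
\[
  \sum_{k=0}^{N-1}\floor{\frac{x+ak}{N}}
  = a'N'\,\frac{d(d-1)}{2} + d\sum_{j=0}^{N'-1}\floor{\frac{(x/d)+a'j}{N'}}.
\]

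It therefore suffices to prove, for $\gcd(b,n) = 1$ and any real $y$, that $\sum_{j=0}^{n-1}\floor{\frac{y+bj}{n}} = \floor{y} + \frac{(b-1)(n-1)}{2}$. As $j$ runs over a complete residue system modulo $n$, so does $bj$; writing $bj = n\floor{bj/n} + r_j$ with $r_j \in \{0,\dots,n-1\}$, the $r_j$ form a permutation of $\{0,\dots,n-1\}$ and $\floor{\frac{y+bj}{n}} = \floor{bj/n} + \floor{\frac{y+r_j}{n}}$. So the sum splits as $\sum_{j=0}^{n-1}\floor{bj/n} + \sum_{r=0}^{n-1}\floor{\frac{y+r}{n}}$. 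The second piece equals $\floor{y}$ by Hermite's identity, which holds because $x \mapsto \sum_{r=0}^{n-1}\floor{\frac{x+r}{n}} - \floor{x}$ is $1$-periodic and identically zero on $[0,1)$. For the first piece, pairing $j$ with $n-j$ and noting that $\gcd(b,n)=1$ forces $bj/n \notin \Z$ for $1 \le j \le n-1$ gives $\floor{bj/n} + \floor{b(n-j)/n} = b-1$, whence $\sum_{j=0}^{n-1}\floor{bj/n} = \frac{(b-1)(n-1)}{2}$.

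Applying this with $n = N'$, $b = a'$, $y = x/d$ turns the displayed reduction into
\[
  \sum_{k=0}^{N-1}\floor{\frac{x+ak}{N}}
  = a'N'\,\frac{d(d-1)}{2} + d\floor{\frac{x}{d}} + d\,\frac{(a'-1)(N'-1)}{2},
\]
so the lemma comes down to checking $a'N'\frac{d(d-1)}{2} + d\frac{(a'-1)(N'-1)}{2} = \frac{(a-1)(N-1)}{2} + \frac{d-1}{2}$; and indeed, substituting $a = da'$, $N = dN'$, both sides expand to $\frac{d^2 a'N' - da' - dN' + d}{2}$. The ``in particular'' statement is the special case $d = 1$. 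I do not expect a genuine obstacle here: every step is elementary, and the only point needing care is carrying a non-integer $x$ through Hermite's identity so that the term $d\floor{x/d}$ (rather than $\floor{x}$) emerges in the general case.
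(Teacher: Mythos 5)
Your proof is correct. The paper does not prove this lemma at all---it simply cites it to Graham--Knuth--Patashnik---so there is no in-paper argument to compare against; your derivation (reduce to the coprime case via $k=N'm+j$, then handle that case with the permutation of residues $bj \bmod n$, Hermite's identity, and the pairing $j \leftrightarrow n-j$) is a standard, complete, and self-contained verification. The only spot worth a second look is the self-paired term $j=n/2$ when $n$ is even, but there $\gcd(b,n)=1$ forces $b$ odd, so $2\floor{b/2}=b-1$ and the pairing argument goes through uniformly.
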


\begin{lemma}
    \label{lemma:character-orthogonality}
    Suppose a function $f(m, n)$ is of the form $f(m, n) = f_1(m) + f_2(n)$. Then \[
    \sum_{m \shortmod {q_1}} \sum_{n \shortmod{q_2}} \chi_1(m) \chi_2(n) f(m, n) = 0. \]
\end{lemma}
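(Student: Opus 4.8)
The plan is to exploit the additive structure of $f$ to break the double sum into a product of single sums, and then apply the orthogonality relation for nontrivial Dirichlet characters. Writing $f(m,n) = f_1(m) + f_2(n)$ and distributing $\chi_1(m)\chi_2(n)$ over the two summands, I would rearrange into
\[
\sum_{m \shortmod{q_1}} \sum_{n \shortmod{q_2}} \chi_1(m) \chi_2(n) f(m,n) = \Big( \sum_{m \shortmod{q_1}} \chi_1(m) f_1(m) \Big)\Big( \sum_{n \shortmod{q_2}} \chi_2(n) \Big) + \Big( \sum_{m \shortmod{q_1}} \chi_1(m) \Big)\Big( \sum_{n \shortmod{q_2}} \chi_2(n) f_2(n) \Big).
\]
Each of the two ``bare'' character sums $\sum_{m \shortmod{q_1}} \chi_1(m)$ and $\sum_{n \shortmod{q_2}} \chi_2(n)$ is zero, since $\chi_1$ and $\chi_2$ are nontrivial Dirichlet characters by the standing hypotheses of Definition~\ref{defi:SVY}: for a nontrivial character $\chi$ modulo $q$, choosing $t$ coprime to $q$ with $\chi(t) \neq 1$ gives $\chi(t)\sum_{m} \chi(m) = \sum_{m} \chi(tm) = \sum_{m} \chi(m)$, which forces $\sum_m \chi(m) = 0$. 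Hence both terms on the right-hand side vanish, and the lemma follows.

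There is essentially no obstacle here; the only point requiring care is that the argument genuinely uses nontriviality of \emph{both} characters, and that in the surviving-looking inner sums $f_1$ and $f_2$ are understood to be evaluated on a fixed set of residue representatives — but since each such sum is multiplied by a vanishing bare character sum, the choice of representatives is immaterial. One could alternatively phrase the computation without reference to representatives by noting that $\chi_1(m) f_1(m)$ need not be well defined mod $q_1$ yet its sum is still annihilated by the factor $\sum_n \chi_2(n) = 0$, so the identity holds verbatim.
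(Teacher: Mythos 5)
Your proposal is correct and follows the same route as the paper: distribute $f_1(m)+f_2(n)$, factor the double sum into two products each containing a bare character sum, and kill those by the orthogonality (nontriviality) of $\chi_1$ and $\chi_2$. The only difference is that you spell out the standard proof that $\sum_m \chi(m)=0$ for nontrivial $\chi$, which the paper simply cites.
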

\begin{proof}
    Substituting $f(m, n) = f_1(m) + f_2(n)$ and distributing, we get \[
    \brackets{\sum_{m \shortmod {q_1}} \chi_1(m) f_1(m) \sum_{n \shortmod{q_2}} \chi_2(n) } + \brackets{\sum_{n \shortmod {q_2}} \chi_2(n) f_2(n) \sum_{m \shortmod{q_1}} \chi_1(m) }.
    \]
    Both sums vanish by the orthogonality of Dirichlet characters.
\end{proof}

\begin{proposition}
For $c>0$ the following is an alternative formula for the Dedekind sum:
    \label{proposition:the-half-cancels}
    \begin{equation}
        \label{eqn:dedekind-sum-fractional}
        \dedekindsum(a,c) =\sum_{j \shortmod c} \sum_{n \shortmod{q_1}} \overline{\chi_2}(j) \overline{\chi_1}(n) \left\{ \frac{j}{c} \right\} \left\{\frac{aj}{c} + \frac{n}{q_1}\right\}.
    \end{equation}
\end{proposition}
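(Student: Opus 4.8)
The strategy is to pass from the Bernoulli function $B_1$ to the fractional part term by term, using that the discrepancy between the two is supported on integer arguments, which the character factor $\overline{\chi_2}(j)$ will be seen to annihilate. Recall that $B_1(x) = \{x\} - \frac{1}{2}$ when $x \notin \mathbb{Z}$, while $B_1(x) = 0 = \{x\}$ when $x \in \mathbb{Z}$; so $B_1(x)$ and $\{x\} - \frac{1}{2}$ agree off $\mathbb{Z}$ and differ by $\frac{1}{2}$ on $\mathbb{Z}$. The first thing I would record is that, in every term of \eqref{eq:DedekindDefBernoulli} for which $\tfrac{j}{c} \in \mathbb{Z}$ or $\tfrac{aj}{c} + \tfrac{n}{q_1} \in \mathbb{Z}$, one has $\overline{\chi_2}(j) = 0$. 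This uses the standing assumptions, which give $q_1 q_2 \mid c$ and $\gcd(a,c) = 1$: if $c \mid j$ then $q_2 \mid j$; and if $\tfrac{aj}{c} + \tfrac{n}{q_1} \in \mathbb{Z}$, then clearing denominators gives $c \mid a j q_1$, hence $c \mid j q_1$, and writing $c = q_1 q_2 t$ forces $q_2 \mid j$. In either case $\overline{\chi_2}(j)$ vanishes.

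Granting this, I can replace $B_1\big(\tfrac{j}{c}\big)$ by $\big\{\tfrac{j}{c}\big\} - \tfrac{1}{2}$ and $B_1\big(\tfrac{aj}{c} + \tfrac{n}{q_1}\big)$ by $\big\{\tfrac{aj}{c} + \tfrac{n}{q_1}\big\} - \tfrac{1}{2}$ term by term inside the weighted sum: on the terms where both arguments are non-integral this is simply the identity $B_1(x) = \{x\} - \tfrac{1}{2}$, and on the remaining terms both expressions carry the factor $\overline{\chi_2}(j) = 0$. This yields
\[
\dedekindsum(a,c) = \sum_{j \shortmod{c}} \sum_{n \shortmod{q_1}} \overline{\chi_2}(j)\,\overline{\chi_1}(n) \left( \left\{ \tfrac{j}{c} \right\} - \tfrac{1}{2} \right) \left( \left\{ \tfrac{aj}{c} + \tfrac{n}{q_1} \right\} - \tfrac{1}{2} \right).
\]
Expanding the product produces four sums. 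The term $\big\{\tfrac{j}{c}\big\}\big\{\tfrac{aj}{c} + \tfrac{n}{q_1}\big\}$ is exactly the right-hand side of \eqref{eqn:dedekind-sum-fractional}, so it remains to show the other three vanish. The $\tfrac{1}{4}$-term and the $\tfrac{1}{2}\big\{\tfrac{j}{c}\big\}$-term (whose summand is independent of $n$) both vanish because $\sum_{n \shortmod{q_1}} \overline{\chi_1}(n) = 0$, $\chi_1$ being nontrivial; one may also phrase these two via Lemma~\ref{lemma:character-orthogonality}.

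The one substantive point is the vanishing of $\Sigma := \sum_{j,n} \overline{\chi_2}(j)\overline{\chi_1}(n)\big\{\tfrac{aj}{c} + \tfrac{n}{q_1}\big\}$; note Lemma~\ref{lemma:character-orthogonality} does not apply directly here since $\big\{\tfrac{aj}{c} + \tfrac{n}{q_1}\big\}$ is not of the separable form $f_1(j) + f_2(n)$. I would handle this via the substitution $(j,n) \mapsto (-j,-n)$: the weight $\overline{\chi_2}(j)\overline{\chi_1}(n)$ is invariant because $\chi_1 \chi_2(-1) = 1$, and the argument of the fractional part merely negates, so adding the two resulting expressions for $\Sigma$ gives $2\Sigma = \sum_{j,n} \overline{\chi_2}(j)\overline{\chi_1}(n)\big(\{t\} + \{-t\}\big)$ with $t = \tfrac{aj}{c} + \tfrac{n}{q_1}$. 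Since $\{t\} + \{-t\}$ equals $1$ when $t \notin \mathbb{Z}$ and $0$ when $t \in \mathbb{Z}$, and since (by the first paragraph) $t \in \mathbb{Z}$ forces $\overline{\chi_2}(j) = 0$, the right side collapses to $\sum_{j,n} \overline{\chi_2}(j)\overline{\chi_1}(n) = 0$. Hence $\Sigma = 0$, which completes the proof. I expect this last step to be the main obstacle: everything else is bookkeeping around the sawtooth/fractional-part discrepancy, but the vanishing of $\Sigma$ genuinely uses both the parity hypothesis $\chi_1\chi_2(-1) = 1$ and $q_1 q_2 \mid c$. (Alternatively, $\Sigma = 0$ can be shown by summing over $j$ first, after which, because $\gcd(a,c)=1$ and $q_2 \mid c/q_1$, one is left with partial sums of $\overline{\chi_2}$ over blocks of length divisible by $q_2$, each of which is zero.)
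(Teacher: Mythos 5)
Your proposal is correct and follows essentially the same route as the paper: replace $B_1$ by $\{\cdot\}-\tfrac12$ (justified because integer arguments force $\overline{\chi_2}(j)=0$, the content of Lemma~\ref{lemma:aj+nrq2-not-divis-by-c}), kill two of the four expanded terms by orthogonality, and dispose of the remaining cross term via the substitution $(j,n)\mapsto(-j,-n)$ together with $\{x\}+\{-x\}=1$ off the integers and $\chi_1\chi_2(-1)=1$. If anything, you are slightly more careful than the paper in flagging up front where $B_1(x)=\{x\}-\tfrac12$ fails and why those terms do not contribute.
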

To prove Proposition \ref{proposition:the-half-cancels}, we need the following:
\begin{lemma}
    \label{lemma:aj+nrq2-not-divis-by-c}
    Let $(\begin{smallmatrix}
        a&b\\c&d
    \end{smallmatrix})\in\Gamma_0(q_1q_2)$. 
    Then for $j,n\in \Z$, if $q_2\nmid j$, then
    \[
    \frac{aj}{c}+\frac{n}{q_1}\not\in \Z.
    \]
\end{lemma}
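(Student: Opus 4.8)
The plan is to establish the contrapositive: assuming $\tfrac{aj}{c}+\tfrac{n}{q_1}\in\Z$, I will show $q_2\mid j$. Throughout I use the two standard facts about $\gamma=(\begin{smallmatrix}a&b\\c&d\end{smallmatrix})\in\Gamma_0(q_1q_2)$: first, $q_1q_2\mid c$; and second, $\gcd(a,c)=1$, which follows from $ad-bc=1$. In particular, since $q_2\mid c$, these together give $\gcd(a,q_2)=1$.

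First I would clear denominators. Since $q_1\mid c$, write $c=q_1q_2c'$ with $c'\in\Z$; then
\[
\frac{aj}{c}+\frac{n}{q_1}=\frac{aj+nq_2c'}{c},
\]
so the hypothesis becomes $c\mid aj+nq_2c'$. Next I reduce this relation modulo $q_2$: because $q_2\mid c$, we obtain $q_2\mid aj+nq_2c'$, and the second summand is plainly a multiple of $q_2$, hence $q_2\mid aj$. Finally, $\gcd(a,q_2)=1$ yields $q_2\mid j$, which is exactly the contrapositive of the claim.

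I do not anticipate a genuine obstacle here; the only points needing a little care are the bookkeeping when passing to the common denominator $c$ and the observation that $\gcd(a,q_2)=1$ is forced by $\gamma\in\Gamma_0(q_1q_2)$. Alternatively, one could run the same computation entirely inside $\Z/q_2\Z$: multiply $\tfrac{aj}{c}+\tfrac{n}{q_1}\in\Z$ by $c$, note that $c/q_1\equiv 0\pmod{q_2}$, and read off $aj\equiv 0\pmod{q_2}$, avoiding the auxiliary integer $c'$ altogether.
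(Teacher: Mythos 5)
Your proof is correct and follows essentially the same route as the paper: both clear denominators using $c=rq_1q_2$ to get $c\mid aj+nrq_2$, then use $\gcd(a,c)=1$ to conclude $q_2\mid j$. Your reduction modulo $q_2$ is in fact a slightly cleaner way to finish than the paper's chain of gcd identities, but the argument is the same in substance.
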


\begin{proof}
    Assume $\frac{aj}{c}+\frac{n}{q_1}\in \Z$, and write $c= rq_1 q_2$. Then $aj+nrq_2\equiv 0\pmod c$, so $\gcd(aj,c)=\gcd(j,c)=\gcd(nrq_2,c)$, which is a multiple of $q_2$.
Thus $q_2\mid j$.
\end{proof}

Now we can prove Proposition~\ref{proposition:the-half-cancels}.
\begin{proof}[Proof of Proposition~\ref{proposition:the-half-cancels}]
    Let us begin by analyzing the product
    \[
    B_1\parentheses{\frac{j}{c}} B_1\parentheses{\frac{aj}{c} + \frac{n}{q_1}} = \parentheses{\fractional{\frac{j}{c}} - \frac{1}{2}} \parentheses{\fractional{\frac{aj}{c} + \frac{n}{q_1}} - \frac{1}{2}}.
    \]
    Distributing leads to the following four terms:
    \[
    \fractional{\frac{j}{c}} \fractional{\frac{aj}{c} + \frac{n}{q_1}}
    - \frac{1}{2} \fractional{\frac{aj}{c} + \frac{n}{q_1}} - \frac{1}{2} \fractional{\frac{j}{c}} + \frac{1}{4}.
    \]
    Summing over $\chi_1$ and $\chi_2$ 
    as in Definition \ref{defi:SVY}, the last two terms vanish
    by Lemma \ref{lemma:character-orthogonality}. It is now sufficient to show that $Z=0$, where
    \begin{equation}
    \label{eqn:aj/c+n/q_1-vanishes}
Z=    \sum_{j \shortmod c} \sum_{n \shortmod{q_1}} \overline{\chi_2}(j) \overline{\chi_1}(n) \fractional{\frac{aj}{c} + \frac{n}{q_1}}.
    \end{equation}

    We use a change of variables $j \to -j$ and $n \to -n$ to get
    \[
    Z=\sum_{j \shortmod c} \sum_{n \shortmod{q_1}} \overline{\chi_2}(-j) \overline{\chi_1}(-n) {\fractional{\frac{a(-j)}{c} + \frac{(-n)}{q_1}}}.
    \]
    Note that $\fractional{-x} = 1 - \fractional{x}$ when $x$ is not an integer. 
    Hence by Lemma \ref{lemma:aj+nrq2-not-divis-by-c}, we have
    \[
Z =     \sum_{j \shortmod c} \sum_{n \shortmod{q_1}} \overline{\chi_2}(j) \overline{\chi_1}(n) \parentheses{1 - {\fractional{\frac{aj}{c} + \frac{n}{q_1}}}}.
    \]
    The part of the sum with "$1$" vanishes by Lemma \ref{lemma:character-orthogonality}.  
    Hence $Z = -Z$, so $Z=0$.
    \end{proof}

\section{Proofs of the main theorems}
\subsection{Initial control of denominator}
As a warm-up, note that
Proposition \ref{proposition:the-half-cancels}
immediately implies $S_{\chi_1, \chi_2}(a,c) \in \frac{1}{c^2} Z[\chi_1, \chi_2]$.  This has the weakness that the denominator grows quadratically in $c$.  As a first step towards improving this, we have the following result which displays linear dependence in $c$.  Actually, it is linear in $c/q_2$, which is even better.
\begin{theorem}
\label{thm:denominator-is-rq1}
    Say $c  = r q_1 q_2$ for some positive integer $r$.  Then
    \[
    S_{\chi_1, \chi_2}(a,c) = \frac{z}{r q_1} \quad \quad \text{for some $z \in \Z[\chi_1, \chi_2]$}.
    \]
\end{theorem}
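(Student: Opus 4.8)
The plan is to start from the fractional-part formula \eqref{eqn:dedekind-sum-fractional} and expand the inner product $\fractional{j/c}\fractional{aj/c + n/q_1}$ by writing each fractional part as $x - \floor{x}$. The term $\frac{j}{c}\cdot\left(\frac{aj}{c}+\frac{n}{q_1}\right)$ is a polynomial expression; after summing against $\overline{\chi_2}(j)\overline{\chi_1}(n)$, the pieces that split as $f_1(j) + f_2(n)$ vanish by Lemma~\ref{lemma:character-orthogonality}, and the surviving "mixed" piece is $\sum_{j,n}\overline{\chi_2}(j)\overline{\chi_1}(n)\frac{aj}{c}\cdot\frac{n}{q_1}$, which after pulling out constants is $\frac{a}{c q_1}\left(\sum_j j\overline{\chi_2}(j)\right)\left(\sum_n n\overline{\chi_1}(n)\right)$, a Gauss-sum-type quantity lying in $\frac{1}{c q_1}\Z[\chi_1,\chi_2]$ — but actually one factors out better: since $c = rq_1q_2$ and the $j$-sum runs mod $c$, one should collapse the $j$-sum mod $q_2$ where possible. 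So the first task is to reduce the modulus of the $j$-summation from $c$ to something smaller by a periodicity/averaging argument.

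The key step I would carry out is to write $j = q_2 k + j_0$ (or use Lemma~\ref{lemma:floor-function-properties} directly) to evaluate the $\floor{\cdot}$ terms. Concretely, for fixed residues I would sum $\floor{\frac{aj}{c}+\frac{n}{q_1}}$ and $\floor{\frac{j}{c}}$ over the arithmetic progression in $j$, applying Lemma~\ref{lemma:floor-function-properties} to turn each such sum into a single floor plus an explicit rational constant with controlled denominator ($2$, or $2\gcd$). Since $\overline{\chi_2}(j)$ depends only on $j \bmod q_2$, the sum over the $c/q_2 = rq_1$ values of $j$ in each class against a floor function telescopes via the Lemma, and the "junk" constants either cancel against the character sum (Lemma~\ref{lemma:character-orthogonality}) or contribute a denominator dividing $rq_1$ after accounting for the $1/c$ already present. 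Tracking exactly which powers of $c$, $q_1$, $q_2$, $r$ survive, and seeing the $q_2$'s cancel to leave denominator $rq_1$, is the bookkeeping heart of the argument.

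The main obstacle I anticipate is the careful denominator accounting: naively \eqref{eqn:dedekind-sum-fractional} gives $1/c^2 = 1/(r^2q_1^2q_2^2)$, and one must show $rq_1q_2^2$ worth of cancellation. One factor of $c$ disappears because $\fractional{j/c}$ ranges over multiples of $1/c$ while the $\chi_2$-sum over $j$ effectively has period $q_2$, gaining a factor $c/q_2 = rq_1$ of "smoothing"; a second near-factor of $c$ disappears from the $\floor{aj/c + n/q_1}$ sum via Lemma~\ref{lemma:floor-function-properties}, which replaces a sum of $rq_1$ floors by one floor plus a constant with denominator $2$. The delicate point is that the constant $\frac{(a-1)(N-1)}{2} + \frac{d-1}{2}$ from the Lemma depends on $\gcd$'s that vary with $n$ and the residue class, so one must check these terms are themselves of the split form $f_1(j)+f_2(n)$ (hence killed by Lemma~\ref{lemma:character-orthogonality}) or else manifestly lie in $\frac{1}{rq_1}\Z[\chi_1,\chi_2]$. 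Once that is verified, collecting terms yields $S_{\chi_1,\chi_2}(a,c) \in \frac{1}{rq_1}\Z[\chi_1,\chi_2]$, which is the claim.
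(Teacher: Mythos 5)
Your plan follows the paper's proof of Lemma~\ref{lemma:new-formula} (from which the theorem is immediate) essentially step for step: start from \eqref{eqn:dedekind-sum-fractional}, expand $\fractional{\frac{aj}{c}+\frac{n}{q_1}}$ as $x-\floor{x}$, substitute $j=j_0+kq_2$, sum the floors over the progression via Lemma~\ref{lemma:floor-function-properties}, and kill split terms with Lemma~\ref{lemma:character-orthogonality}. The skeleton is right, but the step you defer --- ``tracking exactly which powers of $c,q_1,q_2,r$ survive'' --- is precisely the content of the theorem, and as written your proposal does not establish it. The surviving polynomial piece (your ``mixed'' term; note it is $\frac{j}{c}\cdot\frac{n}{q_1}$, with no factor of $a$) collapses to
\[
P=\frac{1}{q_1q_2}\Bigl[\sum_{j_0=0}^{q_2-1}\overline{\chi_2}(j_0)\,j_0\Bigr]\cdot\Bigl[\sum_{n=0}^{q_1-1}\overline{\chi_1}(n)\,n\Bigr],
\]
whose denominator $q_1q_2$ is \emph{not} of the form $rq_1$ in general. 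So it does not suffice that each leftover term ``cancels or has denominator dividing $rq_1$'': the term $P$ must cancel \emph{exactly} against part of the floor contribution, and this identity is the missing verification. It does hold: writing $\frac{a(j_0+kq_2)}{c}+\frac{n}{q_1}=\frac{x+ak}{rq_1}$ with $x=\frac{aj_0}{q_2}+rn$, Lemma~\ref{lemma:floor-function-properties} in its coprime form applies uniformly (since $\gcd(a,c)=1$ forces $\gcd(a,rq_1)=1$, so your worry about gcd's varying with $n$ and the residue class does not arise) and gives $\sum_{k}\floor{\frac{x+ak}{rq_1}}=\floor{\frac{aj_0}{q_2}}+rn+\frac{(a-1)(rq_1-1)}{2}$. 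Multiplied by the weight $\frac{j_0}{c}$, the $rn$ term reproduces $P$ exactly, and the remaining terms are split in $(j_0,n)$ and vanish by orthogonality. What is then left of the Dedekind sum is only the piece weighted by $\frac{kq_2}{c}=\frac{k}{rq_1}$ times an integer-valued floor, which visibly lies in $\frac{1}{rq_1}\Z[\chi_1,\chi_2]$. With this cancellation supplied, your argument closes and coincides with the paper's.
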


Theorem \ref{thm:denominator-is-rq1} is analogous to a similar result for the classical Dedekind sum, and our proof is inspired by \cite[Theorem 2]{RG}.
In fact, we have the following more explicit result.
\begin{lemma}
    \label{lemma:new-formula}
    Let $c = r q_1 q_2$ for some positive integer $r$. Then
    \begin{equation}
    \label{eqn:new-formula}
            \dedekindsum(a,c) = -\frac{1}{r q_1} \sum_{j = 0}^{c - 1} \sum_{n = 0}^{q_1 - 1} \overline{\chi_2}(j) \overline{\chi_1}(n) \floor{\frac{j}{q_2}} \floor{\frac{aj}{c} + \frac{n}{q_1}}.
    \end{equation}
\end{lemma}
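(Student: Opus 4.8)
The plan is to start from the fractional-part formula of Proposition~\ref{proposition:the-half-cancels}, taking the representatives $j \in \{0,1,\dots,c-1\}$ so that $\fractional{j/c} = j/c$. Since $c = rq_1q_2$, the elementary identity $\frac{j}{c} = \frac{1}{rq_1}\left(\floor{\frac{j}{q_2}} + \fractional{\frac{j}{q_2}}\right)$ turns Proposition~\ref{proposition:the-half-cancels} into
\[
rq_1\,\dedekindsum(a,c) = \sum_{j=0}^{c-1}\sum_{n=0}^{q_1-1}\overline{\chi_2}(j)\overline{\chi_1}(n)\left(\floor{\frac{j}{q_2}} + \fractional{\frac{j}{q_2}}\right)\fractional{\frac{aj}{c}+\frac{n}{q_1}}.
\]
Next I would apply the purely formal identity $\left(\floor{u}+\fractional{u}\right)\left(v-\floor{v}\right) = \floor{u}\,v - \floor{u}\floor{v} + \fractional{u}\fractional{v}$ with $u = j/q_2$ and $v = aj/c+n/q_1$ (so that $\fractional{v}=v-\floor{v}$). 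This writes the right-hand side as $R_1 - \Sigma + R_2$, where $\Sigma = \sum_{j,n}\overline{\chi_2}(j)\overline{\chi_1}(n)\floor{j/q_2}\floor{aj/c+n/q_1}$, so that $-\Sigma$ is exactly $rq_1$ times the right-hand side of \eqref{eqn:new-formula}, while $R_1 = \sum_{j,n}\overline{\chi_2}(j)\overline{\chi_1}(n)\floor{j/q_2}\left(aj/c+n/q_1\right)$ and $R_2 = \sum_{j,n}\overline{\chi_2}(j)\overline{\chi_1}(n)\fractional{j/q_2}\fractional{aj/c+n/q_1}$. Thus the lemma reduces to showing $R_1 = R_2 = 0$. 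The vanishing of $R_1$ is quick: the part of the summand containing $aj/c$ does not involve $n$, hence is annihilated by $\sum_{n\bmod q_1}\overline{\chi_1}(n)=0$; the part containing $n/q_1$ equals $\tfrac{1}{q_1}\left(\sum_{j=0}^{c-1}\overline{\chi_2}(j)\floor{j/q_2}\right)\left(\sum_{n=0}^{q_1-1}\overline{\chi_1}(n)n\right)$, and writing $j = q_2 t + s$ with $0\le s < q_2$, $0\le t< rq_1$ gives $\sum_j\overline{\chi_2}(j)\floor{j/q_2} = \left(\sum_t t\right)\left(\sum_s\overline{\chi_2}(s)\right) = 0$ since $\chi_2$ is nontrivial.

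The heart of the argument is $R_2 = 0$. Writing again $j = q_2 t + s$ with $0 \le s < q_2$ and $0 \le t < rq_1$, the weights $\overline{\chi_2}(j)=\overline{\chi_2}(s)$ and $\fractional{j/q_2}=s/q_2$ depend only on $s$, while
\[
\frac{aj}{c} + \frac{n}{q_1} = \frac{at+nr}{rq_1} + \frac{as}{rq_1 q_2}.
\]
Since $\gamma\in\Gamma_0(q_1q_2)$ forces $\gcd(a,c)=1$ and hence $\gcd(a,rq_1)=1$, for each fixed $s$ and $n$ the map $t\mapsto (at+nr)\bmod rq_1$ is a bijection of $\{0,1,\dots,rq_1-1\}$, so
\[
\sum_{t=0}^{rq_1-1}\fractional{\frac{at+nr}{rq_1}+\frac{as}{rq_1 q_2}} = \sum_{u=0}^{rq_1-1}\fractional{\frac{u}{rq_1}+\frac{as}{rq_1 q_2}}.
\]
Shifting $u$ by $\floor{as/q_2}$ reduces the latter to $\sum_{m=0}^{rq_1-1}\frac{m+\fractional{as/q_2}}{rq_1}$ (each summand already lying in $[0,1)$), which evaluates to $\tfrac{rq_1-1}{2} + \fractional{as/q_2}$; crucially this is independent of $n$. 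Therefore
\[
R_2 = \left(\sum_{s=0}^{q_2-1}\overline{\chi_2}(s)\frac{s}{q_2}\left(\frac{rq_1-1}{2} + \fractional{\frac{as}{q_2}}\right)\right)\left(\sum_{n=0}^{q_1-1}\overline{\chi_1}(n)\right) = 0,
\]
again because $\chi_1$ is nontrivial, and the lemma follows.

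The only genuinely delicate step is this evaluation of $R_2$. In contrast with the proof of Proposition~\ref{proposition:the-half-cancels}, the substitution $j\to -j$, $n\to -n$ here merely returns $R_2 = R_2$ and gives no information; what makes the argument work is the observation that averaging $\fractional{aj/c+n/q_1}$ over the fibre variable $t$ removes all dependence on $n$, after which the orthogonality relation $\sum_n\overline{\chi_1}(n)=0$ closes the argument. Everything else is routine bookkeeping with fractional parts and complete character sums.
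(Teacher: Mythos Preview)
Your proof is correct, and the underlying ingredients (character orthogonality, the substitution $j=q_2t+s$, and an averaging identity over a complete residue system modulo $rq_1$) are the same ones the paper uses, but you organize the decomposition differently. The paper first splits $\fractional{aj/c+n/q_1}=(aj/c+n/q_1)-\floor{aj/c+n/q_1}$ to write $\dedekindsum=P-Q$, then writes $Q=R+T$ via $j=j_0+kq_2$ and proves $R=P$ using the floor-sum identity of Lemma~\ref{lemma:floor-function-properties}; in your notation, $rq_1 T=\Sigma$ and the cancellation $R=P$ is exactly your combined vanishing $R_1+R_2=0$. Your route, which instead splits $j/c=\frac{1}{rq_1}\bigl(\floor{j/q_2}+\fractional{j/q_2}\bigr)$ at the outset, is somewhat more economical: you never need to compute the product form \eqref{eqn:P} of $P$, you replace the citation of Lemma~\ref{lemma:floor-function-properties} by a direct bijection argument for fractional parts, and you obtain the two separate vanishings $R_1=0$ and $R_2=0$ rather than only their sum. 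The paper's version, on the other hand, has the minor byproduct of exhibiting the closed form for $P$ along the way.
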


    Theorem~\ref{thm:denominator-is-rq1} is a direct corollary of Lemma~\ref{lemma:new-formula}.

\begin{proof} [Proof of Lemma~\ref{lemma:new-formula}]
   We use \eqref{eqn:dedekind-sum-fractional}, and 
take $0 \leq j < c$ so that $\left\{\frac{j}{c}\right\} = \frac{j}{c}$, giving
    \[
    \dedekindsum(a,c)=\sum_{j = 0}^{c - 1} \sum_{n = 0}^{q_1 - 1} \overline{\chi_2}(j) \overline{\chi_1}(n) \frac{j}{c} \left\{\frac{aj}{c} + \frac{n}{q_1}\right\}.
    \]
     Substituting 
    $
    \fractional{\frac{aj}{c} + \frac{n}{q_1}} = \left(\frac{aj}{c} + \frac{n}{q_1} \right) - \floor{\frac{aj}{c} + \frac{n}{q_1}} $ 
    yields
    \[
    \dedekindsum(a,c) = P - Q,
    \]
    where
    \[
    P = \sum_{j = 0}^{c - 1} \sum_{n = 0}^{q_1 - 1} \overline{\chi_2}(j) \overline{\chi_1}(n) \left( \frac{a j^2}{c^2} + \frac{nj}{cq_1} \right), \quad 
    Q = \sum_{j = 0}^{c - 1} \sum_{n = 0}^{q_1 - 1} \overline{\chi_2}(j) \overline{\chi_1}(n) {\frac{j}{c}} \floor{\frac{a j}{c} + \frac{n}{q_1}}.
    \]

    We will show that 
    \begin{align}
    \label{eqn:P}
    P = \frac{1}{q_1 q_2} \left[ \sum_{m = 0}^{q_2 - 1} \overline{\chi_2}(m) m \right] \cdot \left[\sum_{n = 0}^{q_1 - 1} \overline{\chi_1}(n) n \right],
    \end{align}
    and
     \begin{align}
     \label{eqn:Q}
    Q = P+\sum_{j_0 = 0}^{q_2 - 1} \sum_{n = 0}^{q_1 - 1} \overline{\chi_2}(j_0) \overline{\chi_1}(n) \sum_{k = 0}^{rq_1 - 1}  {\frac{k q_2}{c}} \floor{\frac{a (j_0 + k q_2)}{c} + \frac{n}{q_1}}.
    \end{align}
    
    First, we focus on $P$. Using Lemma \ref{lemma:character-orthogonality}, the $\frac{aj^2}{c^2}$ term will simply vanish, so that
    \[
    P=\sum_{j = 0}^{c - 1}   \sum_{n = 0}^{q_1 - 1}  \overline{\chi_2}(j)\overline{\chi_1}(n)\frac{nj}{ c q_1}.
    \]
    Let
    $j = j_0 + k q_2$ 
    where $0 \leq j_0 < q_2$ and $0 \leq k < \frac{c}{q_2} = r q_1$, giving
    \begin{align*}
        P&=\sum_{j_0 = 0}^{q_2 - 1}\sum_{n = 0}^{q_1 - 1}  \overline{\chi_2}(j_0)\overline{\chi_1}(n) \sum_{k = 0}^{rq_1 - 1}  
        \parentheses{
        \frac{nj_0}{ c q_1} + \frac{n k q_2}{c q_1}}.
    \end{align*}

    Using Lemma \ref{lemma:character-orthogonality} again, the $\frac{n k q_2}{c q_1}$ term will vanish, so that
    
    \begin{align*}
    P&=\sum_{j_0 = 0}^{q_2 - 1}\sum_{n = 0}^{q_1 - 1} \overline{\chi_2}(j_0) \overline{\chi_1}(n) \sum_{k = 0}^{rq_1 - 1}  \frac{nj_0}{ c q_1} \\
    &=
    \brackets{\sum_{k = 0}^{r q_1 - 1}  \frac{1}{ c q_1}} \cdot
    \brackets{\sum_{j_0 = 0}^{q_2 - 1}\overline{\chi_2}(j_0)j_0} \cdot \brackets{\sum_{n = 0}^{q_1 - 1} \overline{\chi_1}(n) n}.
    \end{align*}
    This directly simplifies to give \eqref{eqn:P}.

    Turning to $Q$, we change variables $j = j_0 + k q_2$ 
    with $0 \leq j_0 < q_2$ and $0 \leq k < \frac{c}{q_2} = rq_1$ as before,
    to obtain
    \[
    Q = \sum_{j_0 = 0}^{q_2 - 1} \sum_{n = 0}^{q_1 - 1} \overline{\chi_2}(j_0) \overline{\chi_1}(n) \sum_{k = 0}^{r q_1 - 1}  {\frac{j_0 + k q_2}{c}} \floor{\frac{a (j_0 + k q_2)}{c} + \frac{n}{q_1}}.
    \]
    We distribute this sum to get $Q = R + T$, where

    \begin{align}
    \label{eqn:R}
    R = \sum_{j_0 = 0}^{q_2 - 1} \sum_{n = 0}^{q_1 - 1} \overline{\chi_2}(j_0) \overline{\chi_1}(n) \sum_{k = 0}^{r q_1 - 1}  {\frac{j_0}{c}} \floor{\frac{a (j_0 + k q_2)}{c} + \frac{n}{q_1}}, 
    \end{align}
    \begin{align}
    \label{eqn:T}
    T = \sum_{j_0 = 0}^{q_2 - 1} \sum_{n = 0}^{q_1 - 1} \overline{\chi_2}(j_0) \overline{\chi_1}(n) \sum_{k = 0}^{r q_1 - 1}  {\frac{k q_2}{c}} \floor{\frac{a (j_0 + k q_2)}{c} + \frac{n}{q_1}}.
    \end{align}
    We will now show $R = P$, which will then imply \eqref{eqn:Q}. 
        We apply Lemma \ref{lemma:floor-function-properties} with $x=\frac{aj_0}{q_2}+rn$, $N=rq_1$, and with both $a$ and $k$ remaining the same. This leaves us with the innermost sum in \eqref{eqn:R} evaluating as
    
    \begin{align*}
    \frac{j_0}{c}\left(\floor{\frac{aj_0}{q_2}+rn} +\frac{1}{2}(a-1)(rq_1-1)\right)&=
    \frac{j_0}{c} \floor{\frac{a j_0}{q_2}} +\frac{nj_0}{q_1q_2}+\frac{j_0(a-1)(rq_1-1)}{2c}.
    \end{align*}
    All the terms except $\frac{nj_0}{q_1q_2}$ depend on only one of $n$ or $j_0$, so by Lemma~\ref{lemma:character-orthogonality} they will vanish, leaving us with $R$ as in \eqref{eqn:P}.

    Using $\dedekindsum(a,c) = P - Q$ and $Q = P + T$ yields
    $\dedekindsum(a,c) =  -T$, and hence
\[
    \dedekindsum(a,c) = -\frac{1}{rq_1} \sum_{j_0 = 0}^{q_2 - 1} \sum_{n = 0}^{q_1 - 1} \overline{\chi_2}(j_0) \overline{\chi_1}(n) \sum_{k = 0}^{r q_1 - 1} k \floor{\frac{a (j_0 + k q_2)}{c} + \frac{n}{q_1}}.
\]

    Reverting the previous change of variables $j = j_0 + k q_2$ and
    using $\floor{\frac{j}{q_2}} = \floor{\frac{j_0 + k q_2}{ q_2}} = k$ then yields \eqref{eqn:new-formula}.
\end{proof}

\subsection{Proof of Theorems~\ref{thm:main-thm-gamma1} and ~\ref{thm:main-thm-gamma0}}
It is useful to state
the following conditions.
\begin{conditions}
    \label{cond:gamma-1}
    Suppose $\gamma \in \Gamma_1(q_1 q_2)$.
\end{conditions}
\begin{conditions}
    \label{cond:gamma-0}
    Suppose $\gamma \in \Gamma_0(q_1 q_2)$, $\chi_1, \chi_2$ are quadratic, and $q_1, q_2 >4$ are odd. 
\end{conditions}

The key to our proof of Theorems~\ref{thm:main-thm-gamma1} and ~\ref{thm:main-thm-gamma0} is the reciprocity formula (Proposition \ref{prop:reciprocity-law}).
The following is a simplified form of the reciprocity law, which allows us to interchange $\gamma$ and $\gamma'$ while retaining the integrality of the sum.
\begin{lemma}
\label{lemma:reciprocity-law-simplification}
Under either Condition~\ref{cond:gamma-1} or~\ref{cond:gamma-0},
\[
    \dedekindsum(\gamma) = \chi_1(-1) S_{\chi_2, \chi_1}(\gamma') + 2z \quad \text{ for some } z \in \Z[\chi_1, \chi_2].
\]
\end{lemma}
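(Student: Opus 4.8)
The plan is to feed $\gamma$ into the reciprocity law (Proposition~\ref{prop:reciprocity-law}) and to show that the correction term appearing there always lies in $2\Z[\chi_1,\chi_2]$. Since Definition~\ref{defi:SVY} forces $\chi_1\chi_2(-1)=1$, the characters $\chi_1$ and $\chi_2$ have the same parity, so we argue by cases on whether they are even or odd. Two of these cases are immediate. Under Condition~\ref{cond:gamma-1} a direct computation shows that $\gamma'$ again lies in $\Gamma_1(q_1q_2)$ (its lower-left entry is a multiple of $q_1q_2$, its diagonal entries are $\equiv 1\pmod{q_1q_2}$, and its determinant equals $ad-bc=1$); hence $\psi(\gamma)=\chi_1\overline{\chi_2}(1)=1$, the correction term in the odd case of Proposition~\ref{prop:reciprocity-law} vanishes, and the remark following that proposition gives $\dedekindsum(\gamma)=\chi_1(-1)S_{\chi_2,\chi_1}(\gamma')$, so $z=0$ works. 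Under Condition~\ref{cond:gamma-0} with $\chi_1,\chi_2$ even, we have $\chi_1(-1)=1$ and the even reciprocity law reads $\dedekindsum(\gamma)=S_{\chi_2,\chi_1}(\gamma')=\chi_1(-1)S_{\chi_2,\chi_1}(\gamma')$, so again $z=0$.

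The substantive case is Condition~\ref{cond:gamma-0} with $\chi_1,\chi_2$ odd. Here $\chi_1(-1)=-1$, so $-S_{\chi_2,\chi_1}(\gamma')=\chi_1(-1)S_{\chi_2,\chi_1}(\gamma')$, and $\Z[\chi_1,\chi_2]=\Z$; thus it remains to show that the correction term
\[
(1-\psi(\gamma))\,\frac{\tau(\overline{\chi_1})\tau(\overline{\chi_2})}{(\pi i)^2}\,L(1,\chi_1)L(1,\chi_2)
\]
lies in $2\Z$. Since $d$ is a unit modulo $q_1q_2$ and each $\chi_i$ has order $2$, we have $\psi(\gamma)=\chi_1\chi_2(d)\in\{\pm1\}$, so the factor $1-\psi(\gamma)$ is either $0$ or $2$; it therefore suffices to show that $\frac{\tau(\overline{\chi_1})\tau(\overline{\chi_2})}{(\pi i)^2}L(1,\chi_1)L(1,\chi_2)$ is a rational integer. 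I would obtain this from the classical evaluation $L(1,\chi)=\frac{\pi i\,\tau(\chi)}{q}B_{1,\overline\chi}$ for an odd primitive character $\chi$ modulo $q$, where $B_{1,\overline\chi}=\frac1q\sum_{a=1}^{q}\overline\chi(a)a$, together with $\tau(\chi)\tau(\overline\chi)=\chi(-1)q$: for each $j$ these give $\frac{\tau(\overline{\chi_j})L(1,\chi_j)}{\pi i}=-B_{1,\overline{\chi_j}}=-B_{1,\chi_j}$, so the quantity in question equals $B_{1,\chi_1}B_{1,\chi_2}$. Finally, an odd quadratic character of conductor $q_j$ forces $q_j\equiv 3\pmod 4$, so $\chi_j$ is the Kronecker symbol of the fundamental discriminant $-q_j$, and the class number formula gives $h(-q_j)=\frac{w_j}{2}|B_{1,\chi_j}|$, where $w_j$ is the number of roots of unity in $\Q(\sqrt{-q_j})$. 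As $q_j$ is odd and $>4$ we have $q_j\ge 7$, whence $w_j=2$ and $|B_{1,\chi_j}|=h(-q_j)\in\Z$; since $B_{1,\chi_j}$ is rational, $B_{1,\chi_j}\in\Z$. Hence the correction term equals $(1-\psi(\gamma))B_{1,\chi_1}B_{1,\chi_2}\in 2\Z$, and one may take $z=\tfrac12(1-\psi(\gamma))B_{1,\chi_1}B_{1,\chi_2}$.

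The one genuine obstacle is the integrality $B_{1,\chi_j}\in\Z$: this is exactly where the hypothesis ``$q_j$ odd and $>4$'' enters, since for $q=3$ or $q=4$ one has $w\in\{6,4\}$ and $B_{1,\chi}$ has denominator $3$ or $2$, so $B_{1,\chi_1}B_{1,\chi_2}$ need not be an integer. One must also take care to quote the evaluation of $L(1,\chi)$, the identity $\tau(\chi)\tau(\overline\chi)=\chi(-1)q$, and the class number formula with mutually consistent normalizations of the Gauss sum and the generalized Bernoulli number. Everything else is routine bookkeeping: checking that $\gamma'$ stays in $\Gamma_1$ resp.\ $\Gamma_0$, matching the sign $\chi_1(-1)$ to the common parity of $\chi_1$ and $\chi_2$, and observing that the powers of $\pi$ and of $\sqrt{q_j}$ cancel.
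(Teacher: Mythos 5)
Your proposal is correct and follows essentially the same route as the paper: split by parity, note that the $\Gamma_1$ and even cases are immediate from the reciprocity law, and in the odd quadratic case show the correction term equals $(1-\psi(\gamma))$ times an integer, with $1-\psi(\gamma)\in\{0,2\}$. The paper substitutes $L(1,\chi)=h(-q)\pi/\sqrt{q}$ and $\tau(\chi)=i\sqrt{q}$ directly to get $(1-\psi(\gamma))h(-q_1)h(-q_2)$, whereas you detour through $B_{1,\chi}$ before invoking the same class number formula (and the same use of $q>4$ to force $w=2$), which is only a cosmetic difference.
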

\begin{proof}
    Under Condition~\ref{cond:gamma-1}, Lemma \ref{lemma:reciprocity-law-simplification} follows directly from the reciprocity law, indeed, with $z=0$. So, assume Condition~\ref{cond:gamma-0}. If both $\chi_1,\chi_2$ are even, Lemma \ref{lemma:reciprocity-law-simplification} also follows directly from Proposition~\ref{prop:reciprocity-law}, so, suppose $\chi_1, \chi_2$ are odd.
    For a primitive odd quadratic character $\chi$ with odd conductor $q > 4$,
    and corresponding class number $h(-q)$, we recall the well-known class number formula
    \[
        L(1, \chi) = \frac{h(-q) \pi}{\sqrt{q}}.
    \]
    Therefore, Proposition~\ref{prop:reciprocity-law} gives
    \[
    \dedekindsum(\gamma) = -S_{\chi_2, \chi_1}(\gamma') + (1 - \psi(\gamma)) \frac{\tau(\overline{\chi_1}) \tau(\overline{\chi_2})}{(\pi i)^2}
    \parentheses{\frac{h(-q_1) \pi}{\sqrt{q_1}}}
    \parentheses{\frac{h(-q_2) \pi}{\sqrt{q_2}}}.
    \]
For $\chi$ odd and quadratic of conductor $q$, $\tau(\chi) = i \sqrt{q}$.    
    Hence
    \begin{equation}
    \label{eqn:dedekindsum-psi}
    \dedekindsum(\gamma) = -S_{\chi_2, \chi_1}(\gamma') + (1 - \psi(\gamma))) h(-q_1) h(-q_2).
    \end{equation}
    
    Since the class number is an integer and $\psi(\gamma) = \pm 1$ when $\chi_1, \chi_2$ are quadratic, we have that $(1 - \psi(\gamma))$ is either $0$ or $2$. So, in either case, we have shown our desired result.
\end{proof}

\begin{lemma}
    \label{lemma:denominator-q2}
    Under either Condition~\ref{cond:gamma-1} or Condition~\ref{cond:gamma-0},
    \[
        \dedekindsum(\gamma) \in \frac{1}{q_2} \Z[\chi_1, \chi_2].
    \]
\end{lemma}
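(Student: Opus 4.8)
The plan is to intersect two integrality statements we already possess. On the one hand, Theorem~\ref{thm:denominator-is-rq1} controls the denominator of $S_{\chi_1,\chi_2}(a,c)$ directly: if $c=rq_1q_2$ then $S_{\chi_1,\chi_2}(\gamma)\in\frac{1}{rq_1}\Z[\chi_1,\chi_2]$. On the other hand, the simplified reciprocity law (Lemma~\ref{lemma:reciprocity-law-simplification}) gives $S_{\chi_1,\chi_2}(\gamma)=\chi_1(-1)S_{\chi_2,\chi_1}(\gamma')+2z$ with $z\in\Z[\chi_1,\chi_2]$, where $\gamma'=\left(\begin{smallmatrix} d&-r\\-bq_1q_2&a\end{smallmatrix}\right)$; since the lower-left entry of $\gamma'$ is $|b|\,q_1q_2=|b|\,q_2q_1$, Theorem~\ref{thm:denominator-is-rq1} applied to $S_{\chi_2,\chi_1}$ (i.e.\ with the roles of $q_1$ and $q_2$ swapped) shows $S_{\chi_2,\chi_1}(\gamma')\in\frac{1}{|b|q_2}\Z[\chi_1,\chi_2]$, hence $S_{\chi_1,\chi_2}(\gamma)\in\frac{1}{|b|q_2}\Z[\chi_1,\chi_2]$. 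Combining, $S_{\chi_1,\chi_2}(\gamma)\in\frac{1}{\gcd(rq_1,\,|b|q_2)}\Z[\chi_1,\chi_2]$, using that $\frac1A\Z[\chi_1,\chi_2]\cap\frac1B\Z[\chi_1,\chi_2]=\frac1{\gcd(A,B)}\Z[\chi_1,\chi_2]$ for positive integers $A,B$ (the inclusion $\supseteq$ is immediate, and $\subseteq$ follows from a B\'ezout relation $\gcd(A,B)=uA+vB$). This is worthless as it stands, because $\gcd(rq_1,|b|q_2)$ need not divide $q_2$.

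The key idea that rescues this is that we may modify $\gamma$ by left multiplication by powers of $T=\left(\begin{smallmatrix}1&1\\0&1\end{smallmatrix}\right)$. Indeed $S_{\chi_1,\chi_2}(a,c)$ depends on $a$ only modulo $c$, directly from Definition~\ref{defi:SVY} (replacing $a$ by $a+c$ shifts the argument of the second Bernoulli factor by the integer $j$), so $S_{\chi_1,\chi_2}(T^m\gamma)=S_{\chi_1,\chi_2}(\gamma)$, while $T^m\gamma=\left(\begin{smallmatrix}a+mc&b+md\\c&d\end{smallmatrix}\right)$ has new upper-right entry $b+md$. Since $\gcd(d,c)=1$ forces $\gcd(d,rq_1)=1$, I can choose $m$ so that $\tilde b=b+md$ satisfies $\tilde b\equiv1\pmod{rq_1}$ (this fixes $m$ modulo $rq_1$) and, in addition, $\tilde b<0$ (freely arranged, since along that residue class $\tilde b$ runs over an infinite arithmetic progression with nonzero common difference $rq_1d$); then $\gcd(\tilde b,rq_1)=1$ and $-\tilde b q_1q_2>0$. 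Before this I would dispose of the cases where the lower-left entry $c$ of $\gamma$ is $\leq0$: if $c=0$ then $S_{\chi_1,\chi_2}(\gamma)=0$, and if $c<0$ one reduces to $c>0$ by replacing $\gamma$ with $\gamma^{-1}$ under Condition~\ref{cond:gamma-1} (using that $S_{\chi_1,\chi_2}$ is then a homomorphism, Corollary~\ref{cor:hom-property-gamma0}) or with $-\gamma$ under Condition~\ref{cond:gamma-0} (using $\psi(-I)=(\chi_1\chi_2)(-1)=1$ and $S_{\chi_1,\chi_2}(-I)=0$); in both instances the new matrix still satisfies the relevant Condition.

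So, assuming $c=rq_1q_2>0$, set $\tilde\gamma=T^m\gamma$ with $m$ chosen as above; $\tilde\gamma$ satisfies Condition~\ref{cond:gamma-1} (resp.\ Condition~\ref{cond:gamma-0}) because $T\in\Gamma_1(q_1q_2)$. Running the argument of the first paragraph with $\tilde\gamma$ in place of $\gamma$ (so $b$ becomes $\tilde b$, the lower row of $\tilde\gamma$ is still $(rq_1q_2,d)$, and $\tilde\gamma'=\left(\begin{smallmatrix} d&-r\\-\tilde b q_1q_2&a+mc\end{smallmatrix}\right)$ has positive lower-left entry) yields $S_{\chi_1,\chi_2}(\gamma)=S_{\chi_1,\chi_2}(\tilde\gamma)\in\frac{1}{\gcd(rq_1,\,|\tilde b|q_2)}\Z[\chi_1,\chi_2]$. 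Now $\gcd(\tilde b,rq_1)=1$ gives $\gcd(rq_1,|\tilde b|q_2)=\gcd(rq_1,q_2)$, which divides $q_2$, and therefore $\frac{1}{\gcd(rq_1,q_2)}\Z[\chi_1,\chi_2]\subseteq\frac{1}{q_2}\Z[\chi_1,\chi_2]$, completing the proof. The only real obstacle is the gap identified at the end of the first paragraph --- naively feeding both $\gamma$ and $\gamma'$ into Theorem~\ref{thm:denominator-is-rq1} gives two denominators whose gcd carries no information --- and the whole point of the argument is the translation trick that forces those two denominators to be coprime away from $q_2$; everything else is bookkeeping with the reciprocity law and elementary lattice arithmetic.
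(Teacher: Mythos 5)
Your proof is correct, and it follows the same skeleton as the paper's: exploit the invariance of $S_{\chi_1,\chi_2}$ under unipotent translation to adjust the upper-right entry, apply the simplified reciprocity law (Lemma~\ref{lemma:reciprocity-law-simplification}), feed the reciprocal matrix into Theorem~\ref{thm:denominator-is-rq1} with the roles of $q_1$ and $q_2$ swapped, and intersect lattices. The execution differs in two respects. First, the paper multiplies on the right, $\gamma\mapsto\gamma T^n$, sending $b\mapsto an+b$ and justifying the invariance via the crossed homomorphism property and $S_{\chi_1,\chi_2}(1,0)=0$, whereas you multiply on the left, sending $b\mapsto b+md$ and justifying invariance directly from the periodicity of $B_1$ in Definition~\ref{defi:SVY}; both are fine. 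Second, and more substantively, the paper uses Dirichlet's theorem on primes in arithmetic progressions to pick \emph{two} translates for which $an+b$ is a prime exceeding $q_2$, and intersects $\frac{1}{p_1q_2}\Z[\chi_1,\chi_2]\cap\frac{1}{p_2q_2}\Z[\chi_1,\chi_2]$; you instead pick a \emph{single} translate with $\tilde b\equiv 1\pmod{rq_1}$ and intersect against the bound $\frac{1}{rq_1}\Z[\chi_1,\chi_2]$ coming from applying Theorem~\ref{thm:denominator-is-rq1} to $\gamma$ itself. Your choice is more elementary (a congruence condition rather than Dirichlet's theorem), and your insistence that $\tilde b<0$ so that the reciprocal matrix has positive lower-left entry is actually more careful than the paper, which takes $an+b$ to be a positive prime and leaves the sign issue implicit. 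Your explicit treatment of the degenerate cases $c=0$ and $c<0$ is likewise a harmless addition not present in the paper's proof.
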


\begin{proof}
    The following proof will work for both conditions. Let $\gamma = (\begin{smallmatrix}
        a & b \\ c & d
    \end{smallmatrix})$. Using Proposition~\ref{prop:crossedhomomorphism} and that $S_{\chi_1,\chi_2}(1,0)=0$, we have that \[
        S_{\chi_1, \chi_2} \left( \begin{pmatrix}
            a & b \\ c & d
        \end{pmatrix} \right) = S_{\chi_1, \chi_2} \left( \begin{pmatrix}
            a & b \\ c & d
        \end{pmatrix} \begin{pmatrix}
            1 & n \\ 0 & 1
        \end{pmatrix}  \right) = S_{\chi_1, \chi_2} \left( \begin{pmatrix}
            a & an + b \\ c & cn + d
        \end{pmatrix} \right).
    \]
    Then, by Lemma~\ref{lemma:reciprocity-law-simplification},
    \[
        S_{\chi_1, \chi_2} (a, c) = \chi_1(-1) S_{\chi_2, \chi_1} (cn+d, -(an+b)q_1 q_2) + 2z \quad \text{for some }z \in \Z[\chi_1, \chi_2].
    \]
    By Theorem~\ref{thm:denominator-is-rq1}, we have that
    \begin{equation}
        \label{eqn:q2-an+b}
        \dedekindsum(a,c) \in \frac{1}{q_2 (an + b)} \Z[\chi_1, \chi_2]  + 2\Z[\chi_1, \chi_2]= \frac{1}{q_2 (an + b)} \Z[\chi_1, \chi_2].
    \end{equation}
Here $n$ is an arbitrary integer at our disposal.  
    Using Dirichlet's theorem on arithmetic progressions, we may choose two distinct primes $p_1$ and $p_2$ of the form $an + b$ such that $p_1, p_2 > q_2$.  
    Then
    \begin{equation}
        \dedekindsum(a,c) \in \frac{1}{p_1 q_2} \Z[\chi_1, \chi_2] \cap \frac{1}{p_2 q_2} \Z[\chi_1, \chi_2] = \frac{1}{q_2} \Z[\chi_1, \chi_2].
    \end{equation}
This completes the proof.
\end{proof}

\begin{corollary}
 
    \label{cor:denominator-gcd}
    Under either Condition~\ref{cond:gamma-1} or Condition~\ref{cond:gamma-0},
    \[
        \dedekindsum(\gamma) \in \frac{1}{gcd(q_1, q_2)} \Z[\chi_1, \chi_2].
    \]
\end{corollary}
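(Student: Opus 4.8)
The plan is to combine Lemma~\ref{lemma:denominator-q2} with a symmetry argument. Lemma~\ref{lemma:denominator-q2} already gives $\dedekindsum(\gamma) \in \frac{1}{q_2}\Z[\chi_1,\chi_2]$. Since that lemma's proof is symmetric in the roles of $\chi_1$ and $\chi_2$ — nothing in Conditions~\ref{cond:gamma-1} or~\ref{cond:gamma-0} distinguishes the two characters, and Theorem~\ref{thm:denominator-is-rq1} applies equally to $S_{\chi_2,\chi_1}$ — I would first establish the companion bound $S_{\chi_2,\chi_1}(\gamma') \in \frac{1}{q_1}\Z[\chi_1,\chi_2]$ for the associated matrix $\gamma'$ of the reciprocity law. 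Then, using the simplified reciprocity formula from Lemma~\ref{lemma:reciprocity-law-simplification}, namely $\dedekindsum(\gamma) = \chi_1(-1)S_{\chi_2,\chi_1}(\gamma') + 2z$ with $z \in \Z[\chi_1,\chi_2]$, and noting $\chi_1(-1) = \pm 1$ lies in $\Z[\chi_1,\chi_2]$ with $2z \in \Z[\chi_1,\chi_2]$, I conclude $\dedekindsum(\gamma) \in \frac{1}{q_1}\Z[\chi_1,\chi_2]$ as well.

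Having both $\dedekindsum(\gamma) \in \frac{1}{q_1}\Z[\chi_1,\chi_2]$ and $\dedekindsum(\gamma) \in \frac{1}{q_2}\Z[\chi_1,\chi_2]$, I intersect the two fractional ideals. One has $\frac{1}{q_1}\Z[\chi_1,\chi_2] \cap \frac{1}{q_2}\Z[\chi_1,\chi_2] = \frac{1}{\gcd(q_1,q_2)}\Z[\chi_1,\chi_2]$: writing $\gcd(q_1,q_2) = u q_1 + v q_2$ for integers $u,v$, any element $x$ in the intersection satisfies $q_1 x, q_2 x \in \Z[\chi_1,\chi_2]$, hence $\gcd(q_1,q_2)\, x = u(q_1 x) + v(q_2 x) \in \Z[\chi_1,\chi_2]$, giving the inclusion one way; the reverse inclusion is immediate since $\gcd(q_1,q_2)$ divides both $q_1$ and $q_2$. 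This yields exactly $\dedekindsum(\gamma) \in \frac{1}{\gcd(q_1,q_2)}\Z[\chi_1,\chi_2]$.

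The only point requiring care is the verification that $\gamma'$ (or a suitable $\Gamma_1$-translate of it, as in the proof of Lemma~\ref{lemma:denominator-q2}) satisfies the same hypothesis that was in force — that is, that applying Lemma~\ref{lemma:denominator-q2} with the roles of $q_1$ and $q_2$ swapped is legitimate. Under Condition~\ref{cond:gamma-1} this is clear since $\gamma \in \Gamma_1(q_1 q_2)$ forces $\gamma' \in \Gamma_1(q_1 q_2)$, and under Condition~\ref{cond:gamma-0} the hypotheses on the characters and conductors are already symmetric. So I expect this to be the main, though modest, obstacle: it is really just bookkeeping to confirm that "Lemma~\ref{lemma:denominator-q2} with $\chi_1 \leftrightarrow \chi_2$" is a theorem we have actually proved. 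Once that is granted, the corollary is a two-line consequence.

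\begin{proof}
By Lemma~\ref{lemma:denominator-q2}, $\dedekindsum(\gamma) \in \frac{1}{q_2}\Z[\chi_1,\chi_2]$. Applying the same lemma with the roles of $\chi_1$ and $\chi_2$ interchanged — the hypotheses of Conditions~\ref{cond:gamma-1} and~\ref{cond:gamma-0} are symmetric in the two characters, and $\gamma' \in \Gamma_1(q_1 q_2)$ whenever $\gamma \in \Gamma_1(q_1 q_2)$ — gives $S_{\chi_2,\chi_1}(\gamma') \in \frac{1}{q_1}\Z[\chi_1,\chi_2]$. By Lemma~\ref{lemma:reciprocity-law-simplification}, $\dedekindsum(\gamma) = \chi_1(-1) S_{\chi_2,\chi_1}(\gamma') + 2z$ with $z \in \Z[\chi_1,\chi_2]$; since $\chi_1(-1) = \pm 1 \in \Z[\chi_1,\chi_2]$, it follows that $\dedekindsum(\gamma) \in \frac{1}{q_1}\Z[\chi_1,\chi_2]$ as well. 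Finally, writing $\gcd(q_1,q_2) = u q_1 + v q_2$ for some $u, v \in \Z$, any $x \in \frac{1}{q_1}\Z[\chi_1,\chi_2] \cap \frac{1}{q_2}\Z[\chi_1,\chi_2]$ satisfies $\gcd(q_1,q_2)\, x = u(q_1 x) + v(q_2 x) \in \Z[\chi_1,\chi_2]$, so
\[
\frac{1}{q_1}\Z[\chi_1,\chi_2] \cap \frac{1}{q_2}\Z[\chi_1,\chi_2] = \frac{1}{\gcd(q_1,q_2)}\Z[\chi_1,\chi_2].
\]
Therefore $\dedekindsum(\gamma) \in \frac{1}{\gcd(q_1,q_2)}\Z[\chi_1,\chi_2]$.
\end{proof}
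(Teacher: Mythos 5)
Your proposal is correct and follows essentially the same route as the paper: the paper likewise obtains $\dedekindsum(\gamma)\in\frac{1}{q_2}\Z[\chi_1,\chi_2]$ from Lemma~\ref{lemma:denominator-q2}, obtains $\dedekindsum(\gamma)\in\frac{1}{q_1}\Z[\chi_1,\chi_2]$ by combining Lemma~\ref{lemma:reciprocity-law-simplification} with the same lemma in the roles-swapped form, and intersects. Your write-up merely makes explicit the bookkeeping (that $\gamma'$ satisfies the hypotheses with $\chi_1\leftrightarrow\chi_2$) and the B\'ezout verification of the intersection identity, both of which the paper leaves implicit.
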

\begin{proof}
    By Lemma \ref{lemma:denominator-q2}, we have that
    \[
        \dedekindsum(\gamma) \in \frac{1}{q_2} \Z[\chi_1, \chi_2].
    \]
    By Lemma \ref{lemma:reciprocity-law-simplification} and Lemma \ref{lemma:denominator-q2}, we also have that
    \[
        \dedekindsum(\gamma) \in \frac{1}{q_1} \Z[\chi_1, \chi_2].
    \]
    Intersecting these two sets yields the corollary. 
\end{proof}
\begin{proof}[Proof of Theorems~\ref{thm:main-thm-gamma1} and ~\ref{thm:main-thm-gamma0}]
    These theorems directly follow from Corollary~\ref{cor:denominator-gcd} and the fact that $\Z[\chi_1, \chi_2] = \Z$ under Condition~\ref{cond:gamma-0}.
\end{proof}

\section{Acknowledgements}
This study took place in summer 2024 as part of an REU program at Texas A\&M University. The authors extend their gratitude to the Department of Mathematics at Texas A\&M University and the NSF for their support of the REU. 
The authors thank
Agniva Dasgupta (Texas A\&M University) for his lectures and feedback during the REU. 
Finally, the first three named authors would like to thank Reveille the dog for providing them with much needed emotional support during this project.

\bibliographystyle{alpha}
\bibliography{references}	

\begin{thebibliography}{DLM23}

\bibitem[DLM23]{DM}
E.~De~Leon and S.~McCormick.
\newblock Image of newform {D}edekind sums attached to quadratic characters.
\newblock Unpublished, 2023.

\bibitem[GKP94]{GKP}
Ronald~L. Graham, Donald~E. Knuth, and Oren Patashnik.
\newblock {\em Concrete mathematics}.
\newblock Addison-Wesley Publishing Company, Reading, MA, second edition, 1994.
\newblock A foundation for computer science.

\bibitem[Maj23]{Majure}
Mitchell Majure.
\newblock Algebraic properties of the values of newform {D}edekind sums.
\newblock {\em J. Number Theory}, 250:35--48, 2023.

\bibitem[RG72]{RG}
Hans Rademacher and Emil Grosswald.
\newblock {\em Dedekind sums}.
\newblock The Carus Mathematical Monographs, No. 16. Mathematical Association of America, Washington, DC, 1972.

\bibitem[SVY20]{SVY}
T.~Stucker, A.~Vennos, and M.~P. Young.
\newblock Dedekind sums arising from newform {E}isenstein series.
\newblock {\em Int. J. Number Theory}, 16(10):2129--2139, 2020.

\bibitem[TW24]{TW24}
Preston Tranbarger and Jessica Wang.
\newblock Fast computation of generalized {D}edekind sums.
\newblock {\em Int. J. Number Theory}, 20(5):1219--1232, 2024.

\end{thebibliography}

\end{document}